\newtheorem{corollary}{Corollary}[section]
\newtheorem{remark}{Remark}[section]
\newtheorem{lemma}{Lemma}[section]
\newtheorem{proposition}{Proposition}[section]
\newtheorem{theorem}{Theorem}[section]
\newtheorem{counterexample}{Counterexample}[section]
\newcommand{\bes}{\begin{displaymath}}
\newcommand{\ees}{\end{displaymath}}
\newcommand{\be}{\begin{equation}}
\newcommand{\ee}{\end{equation}}
\newcommand{\ba}{\begin{eqnarray}}
\newcommand{\ea}{\end{eqnarray}}
\newcommand{\bas}{\begin{eqnarray*}}
\newcommand{\eas}{\end{eqnarray*}}
\newcommand{\@Bbb}[1]{\ensuremath{\Bbb #1}}
\newcommand{\B}{{\@Bbb B}}
\newcommand{\C}{{\@Bbb C}}
\newcommand{\E}{{\@Bbb E}}
\newcommand{\F}{{\@Bbb F}}
\newcommand{\G}{{\@Bbb G}}
\renewcommand{\P}{{\@Bbb P}}
\newcommand{\Q}{{\@Bbb Q}}
\newcommand{\bQ}{{\@Bbb Q}}
\newcommand{\N}{{\@Bbb N}}
\newcommand{\R}{{\@Bbb R}}
\newcommand{\T}{{\@Bbb T}}
\newcommand{\bbR}{{\@Bbb R}}
\newcommand{\W}{{\@Bbb W}}
\newcommand{\Z}{{\@Bbb Z}}
\newcommand{\bbZ}{{\@Bbb Z}}
\newcommand{\@s}[1]{\ensuremath{\mathcal #1}}
\newcommand{\cA}{\@s A}
\newcommand{\cB}{\@s B}
\newcommand{\cC}{\@s C}
\newcommand{\cD}{\@s D}
\newcommand{\cE}{\@s E}
\newcommand{\cF}{\@s F}
\newcommand{\cG}{\@s G}
\newcommand{\cH}{\@s H}
\newcommand{\cI}{\@s I}
\newcommand{\cJ}{\@s J}
\newcommand{\cK}{\@s K}
\newcommand{\cL}{\@s L}
\newcommand{\cN}{\@s N}
\newcommand{\cM}{\@s M}
\newcommand{\cO}{\@s O}
\newcommand{\cP}{\@s P}
\newcommand{\cQ}{\@s Q}
\newcommand{\cR}{\@s R}
\newcommand{\cS}{\@s S}
\newcommand{\cT}{\@s T}
\newcommand{\cU}{\@s U}
\newcommand{\cV}{\@s V}
\newcommand{\cW}{\@s W}
\newcommand{\cX}{\@s X}
\newcommand{\cY}{\@s Y}
\newcommand{\cZ}{\@s Z}
\newcommand{\@bm}[1]{\ensuremath{\mathbf #1}}
\newcommand{\bma}{\@bm a}\newcommand{\bmA}{\@bm A}
\newcommand{\bmb}{\@bm b}\newcommand{\bmB}{\@bm B}
\newcommand{\bmc}{\@bm c}\newcommand{\bmC}{\@bm C}
\newcommand{\bmd}{\@bm d}\newcommand{\bmD}{\@bm D}
\newcommand{\bme}{\@bm e}
\newcommand{\bmf}{\@bm f}\newcommand{\bmF}{\@bm F}
\newcommand{\bmg}{\@bm g}\newcommand{\bmG}{\@bm G}
\newcommand{\bmh}{\@bm h}\newcommand{\bmH}{\@bm H}
\newcommand{\bmi}{\@bm i}\newcommand{\bmI}{\@bm I}
\newcommand{\bmj}{\@bm j}
\newcommand{\bmk}{\@bm k}\newcommand{\bmK}{\@bm K}
\newcommand{\bml}{\@bm l}
\newcommand{\bmm}{\@bm m}\newcommand{\bmM}{\@bm M}
\newcommand{\bmn}{\@bm n}
\newcommand{\bmo}{\@bm o}
\newcommand{\bmp}{\@bm p}
\newcommand{\bmq}{\@bm q}\newcommand{\bmQ}{\@bm Q}
\newcommand{\bmr}{\@bm r}
\newcommand{\bms}{\@bm s}\newcommand{\bmS}{\@bm S}
\newcommand{\bmt}{\@bm t}
\newcommand{\bmu}{\@bm u}\newcommand{\bmU}{\@bm U}
\newcommand{\bmw}{\@bm w}\newcommand{\bmW}{\@bm W}
\newcommand{\bmv}{\@bm v}\newcommand{\bmV}{\@bm V}
\newcommand{\bmx}{\@bm x}\newcommand{\bmX}{\@bm X}\newcommand{\bx}{\@bm x}
\newcommand{\bmy}{\@bm y}\newcommand{\bmY}{\@bm Y}\newcommand{\by}{\@bm y}
\newcommand{\bmz}{\@bm z}\newcommand{\bmZ}{\@bm Z}
\newcommand{\bmzero}{\@bm 0}
\newcommand{\supp}{\mathop{\rm supp}}
\newcommand{\dist}{\mathop{\rm dist}}
\newcommand{\@g}[1]{\ensuremath{\mathfrak #1}}
\newcommand{\gA}{\@g A}
\newcommand{\gD}{\@g D}
\newcommand{\gJ}{\@g J}
\newcommand{\gF}{\@g F}
\newcommand{\gM}{\@g M}
\newcommand{\gR}{\@g R}
\newcommand{\Lip}{\mathop{\mathrm{Lip}}}
\newcommand{\mathbbm}[1]{{#1\!\!#1}}
\newcommand{\ind}{{\mathbbm{1}}}
\newcommand{\commentout}[1]{{}}
\newcommand{\RR}{\mathbb{R}}
\newcommand{\QQ}{\mathbb{Q}}
\newcommand{\NN}{\mathbb{N}}
\newcommand{\CDD}{\mathcal{D}}
\newcommand{\CE}{\mathcal{E}}
\newcommand{\CF}{\mathcal{F}}
\newcommand{\CK}{\mathcal{K}}
\newcommand{\CM}{\mathcal{M}}
\newcommand{\CP}{\mathcal{P}}
\newcommand{\eps}{\varepsilon}
\newcommand{\BL}{{\mathrm{BL}}}
\newcommand{\TV}{{\mathrm{TV}}}
\newcommand{\FM}{{\mathrm{FM}}}
\newcommand{\pair}[2]{\left\langle #1 , #2 \right\rangle}
\begin{document}

\title[Schur-like property for measures and consequences]{On a Schur-like property for spaces of measures and its consequences}
\author{Sander C. Hille}
\address{Mathematical Institute, Leiden University, P.O. Box 9512, 2300 RA Leiden, The Netherlands, (SH, MZ)}
\email{\{shille,m.a.ziemlanska\}@math.leidenuniv.nl}
\author{Tomasz Szarek}
\address{Institute of Mathematics, University of Gda\'nsk, Wita Stwosza 57, 80-952 Gda\'nsk, Poland (TS)}
\email{szarek@intertele.pl}
\author{Daniel T.H. Worm}
\address{TNO, Cyber security and Robustness, P.O. Box 96800, 2509 JE, The Hague, The Netherlands, (DW)}
\email{daniel.worm@tno.nl}
\author{Maria A. Ziemla\'nska}

\date{{\small 30th March 2017}}

\subjclass[2000]{46E27, 46N30, 60B10} 
\keywords{Schur property, weak topology, measure, Markov operators, dual bounded Lipschitz norm, equicontinuity}
\thanks{The work of MZ has been partially supported by a Huygens Fellowship of Leiden University.
The research of TS has been partially supported by the
  Polish NCN grants 2016/21/B/ST1/00033.}
\maketitle

\begin{abstract}
A Banach space has the Schur property when every weakly convergent sequence converges in norm. We prove a Schur-like property for measures:  if a sequence of finite signed Borel measures on a Polish space is such that it is bounded in total variation norm and such that for each bounded Lipschitz function the sequence of integrals of this function with respect to these measures converges, then the sequence converges in dual bounded Lipschitz norm or Fortet-Mourier norm to a measure. Two main consequences result: the first is equivalence of concepts of equicontinuity in the theory of Markov operators in probability theory and the second concerns conditions for the coincidence of weak and norm topologies on sets of measures that are bounded in total variation norm that satisfy additional properties. Finally, we derive weak sequential completeness of the space of signed Borel measures on Polish spaces from the Schur-like property.
\end{abstract}

\section{Introduction}

The mathematical study of dynamical systems in discrete or continuous time on spaces of probability measures has a long-lasting history in probability theory (as Markov operators and Markov semigroups, see e.g. \cite{Meyn-Tweedie:2009}) and the field of Iterated Function Systems \cite{Barnsley_ea:1988, Lasota-Yorke:1994} in particular. In analysis there is a growing interest in solutions to evolution equations in spaces of positive or signed measures, e.g. in the study of structured population models \cite{Ackleh-Ito:2005, Canizo_ea:2013, Carillo_ea:2012}, crowd dynamics \cite{Piccoli-Tosin:2011} or interacting particle systems \cite{Evers_ea:2016}. Although an extensive body of functional analytic results have been obtained within probability theory (e.g. see \cite{Billingsley:1968, Bogachev-II:2007, Dudley:1966, LeCam:1957}), there is still need for further results, driven for example by the topic of evolution equations in space of measures, in which there is no conservation of mass.

This paper provides such functional analytic results in two directions: one concerning properties of families of Markov operators on the space of finite signed Borel measures $\CM(S)$ on a Polish space $S$ that satisfy equicontinuity conditions (Theorem \ref{thrm:equicvalence equicont}). The other provides conditions on subsets of $\CM(S)$, where $S$ is a Polish space, such that weak topology on $\CM(S)$ coincides with the norm topology defined by the Fortet-Mourier or dual bounded Lipschitz norm $\|\cdot\|_\BL^*$ (Theorem \ref{thrm:coincidence topologies} and similar results in Section \ref{sec:coincidence topologies}). 

Both are built on Theorem \ref{thrm:weak-implies-strong convergence}, which states that if a sequence of signed measures is bounded in total variation norm and has the property that all real sequences are convergent that result from pairing the given sequence of measures by means of integration to each function in the space of bounded Lipschitz functions, $\BL(S)$, then the sequence is convergent for the $\|\cdot\|_\BL^*$-norm. This is a Schur-{\it like} property. Recall that a Banach space $X$ has the Schur property if every weakly convergent sequence in $X$ is norm convergent (e.g. \cite{Albiac-Kalton:2006}, Definition 2.3.4). For example, the sequence space $\ell^1$ has the Schur property (cf. \cite{Albiac-Kalton:2006}, Theorem 2.3.6). Although the dual space of $(\CM(S),\|\cdot\|_\BL^*)$ is isometrically isomorphic to $\BL(S)$ (cf. \cite{Hille-Worm:2009}, Theorem 3.7), the (completion of the) space $(\CM(S),\|\cdot\|_\BL^*)$ is not a Schur space, generally (see Counterexample \ref{counter:Schur prop}). The condition of bounded total variation cannot be omitted.

Properties of the space of Borel probability measures on $S$ for the weak topology induced by pairing with $C_b(S)$ have been widely studied in probability theory, e.g. consult \cite{Bogachev-II:2007} for an overview.  Dudley \cite{Dudley:1966} studied the pairing between {\it signed measures} and the space of bounded Lipschitz functions, $\BL(S)$, in further detail. Pachl investigated extensively the related pairing with $\mathrm{U}_b(S)$, the space of uniformly continuous and bounded functions \cite{Pachl:1979,Pachl:2013}. See also \cite{Kalton:2004}. Because of our interest in equicontinuous families of Markov operators on the one hand, which is intimately tied to `test functions' in the space $\BL(S)$, and to dynamical systems in spaces of measures equipped with the $\|\cdot\|_\BL^*$-norm, or flat metric, on the other hand, we consider novel functional analytic properties of the space of finite signed Borel measures $\CM(S)$ for the $\BL(S)$-weak topology in relation to the $\|\cdot\|_\BL^*$-norm topology. 

Equicontinuous families of Markov operators were introduced in relation to asymptotic stability: the convergence of the law of stochastic Markov process to an invariant measure (e.g. e-chains \cite{Meyn-Tweedie:2009}, e-property \cite{Czapla-Horbacz:2014, KPS, Lasota-Szarek:2006, SSU:2010}, Cesaro-e-property \cite{Worm-thesis:2010}, Ch.7; see also \cite{Jamison:1964}). Hairer and Mattingly introduced the so-called asymptotic strong Feller property for that purpose \cite{Hairer-Mattingly:2006}. Theorem \ref{thrm:equicvalence equicont} rigorously connects two dual viewpoints -- concerning equicontinuity: Markov operators acting on measures (laws) and Markov operators acting on functions (observables). In dynamical systems theory too, there is special interest in ergodicity properties of maps with equicontinuity properties (e.g. \cite{Li_ea:2015}).

The structure of the paper is as follows. After having introduced some notation and concepts in Section \ref{sec:preliminaries} we provide in  Section \ref{sec:main results} the main results of the paper. The delicate and rather technical proof of the Schur-like property, Theorem \ref{thrm:weak-implies-strong convergence}, is provided in Section \ref{sec:weak-implies-strong}. It uses a kind of geometric argument, inspired by the work of Szarek (see \cite{KPS, Lasota-Szarek:2006}), that enables a tightness argument essentially. Note that our approach yields a new, independent and self-conatined proof of the $\mathrm{U}_b(S)$-weak sequential completeness of $\CM(S)$ (cf. \cite{Pachl:1979}, or \cite{Pachl:2013}, Theorem 5.45) as corollary. Section \ref{sec:weak sequ completeness} shows that the Schur-like property also implies -- for Polish spaces -- the well-known fact of $\sigma(\CM(S),C_b(S))$-weakly sequentially completeness of $\CM(S)$. It uses a type of argument that is of independent interest.

\section{Preliminaries}
\label{sec:preliminaries}

We start with some preliminary results on Lipschitz functions on a metric space $(S,d)$. We denote the vector space of all real-valued Lipschitz functions by $\Lip(S)$. The Lipschitz constant of $f\in\Lip(S)$ is 
\[
|f|_L := \sup\left\{ \frac{|f(x)-f(y)|}{d(x,y)}\,:\, x,y\in S,\ x\neq y\right\}.
\]
$\BL(S)$ is the subspace of bounded functions in $\Lip(S)$. It is a Banach space when equipped with the bounded Lipschitz or Dudley norm
\[
\|f\|_\BL := \|f\|_\infty + |f|_L.
\] 
The norm $\|f\|_\FM:=\max(\|f\|_\infty,|f|_L))$ is equivalent. $\BL(S)$ is partially ordered by pointwise ordering.

The space $\CM(S)$ embeds into $\BL(S)^*$ by means of integration: $\mu\mapsto I_\mu$, where
\[
I_\mu(f) = \pair{\mu}{f} := \int_S f\,d\mu.
\]
The norms on $\BL(S)^*$ dual to either $\|\cdot\|_\BL$ or $\|\cdot\|_\FM$ introduce equivalent norms on $\CM(S)$ through the map $\mu\mapsto I_\mu$. These are called the bounded Lipschitz norm, or Dudley norm, and Fortet-Mourier norm on $\CM(S)$, respectively. $\CM(S)$ equipped with the norm topology induced by either of these norms is denoted by $\CM(S)_\BL$. It is not complete generally. We write $\|\cdot\|_\TV$ for the total variation norm on $\CM(S)$:
\[
\|\mu\|_\TV = |\mu|(S) = \mu^+(S) + \mu^-(S),
\]
where $\mu=\mu^+-\mu^-$ is the Jordan decomposition of $\mu$. $\CM^+(S)$ is the convex cone of positive measures in $\CM(S)$. One has
\begin{equation}
\|\mu\|_\TV = \|\mu\|_\BL^* = \|\mu\|_\FM^*\qquad \mbox{for all}\ \mu\in\CM^+(S).
\end{equation}
In general, for $\mu\in\CM(S)$, $\|\mu\|_\BL^*\leq \|\mu\|_\FM^*\leq \|\mu\|_\TV$.

A finite signed Borel measure $\mu$ is {\it tight} if for every $\eps>0$ there exists a compact set $K_\eps\subset S$ such that $|\mu|(S\setminus K_\eps)<\eps$. A family $M\subset\CM(S)$ is {\it tight} or {\it uniformly tight} if for every $\eps>0$ there exists a compact set $K_\eps\subset S$ such that $|\mu|(S\setminus K_\eps)<\eps$ for all $\mu\in M$. According to Prokhorov's Theorem (see \cite{Bogachev-II:2007}, Theorem 8.6.2), if $(S,d)$ is a complete separable metric space, a set of Borel probability measures $M\subset\CP(S)$ is tight if and only if it is precompact in $\CP(S)_\BL$. Completeness of $S$ is an essential condition for this theorem to hold.
\vskip 0.2cm

In a metric space $(S,d)$, if $A\subset S$ is nonempty, we write
\[
A^\eps := \{x\in S: d(x,A)\leq \eps\}
\]
for the closed $\eps$-neighbourhood of $A$.

\section{Main results}
\label{sec:main results}

A fundamental result on the weak topology on signed measures induced by this pairing is the following fundamental result that provides a `{\it weak-implies-strong-convergence}' property for pairing with $\BL(S)$ on which we build our main results:

\begin{theorem}[Schur-like property]
\label{thrm:weak-implies-strong convergence}
Let $(S,d)$ be a complete separable metric space. Let $(\mu_n)\subset \CM(S)$ be such that $\sup_n\|\mu_n\|_\TV<\infty$. If for every 
$f\in\BL(S)$ the sequence $\pair{\mu_n}{f}$ converges, then there exists $\mu\in\CM(S)$ such that $\|\mu_n-\mu\|_\BL^*\to 0$ as $n\to\infty$.
\end{theorem}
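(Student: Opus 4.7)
The plan is to prove uniform tightness of $(|\mu_n|)$ and then conclude via Prokhorov's theorem. Granting tightness, for each subsequence $(\mu_{n_k})$ the Jordan parts $(\mu_{n_k}^\pm)$ are tight and norm-bounded in $\CM^+(S)$, so Prokhorov yields further subsequences $(\mu_{n_{k_j}}^\pm)$ converging narrowly to $\lambda^\pm \in \CM^+(S)$. On bounded sets of positive measures on a Polish space the narrow topology coincides with the $\|\cdot\|_\BL^*$-topology, hence $\mu_{n_{k_j}} \to \lambda^+ - \lambda^-$ in $\|\cdot\|_\BL^*$. The hypothesis that $\pair{\mu_n}{f}$ converges for every $f \in \BL(S)$, together with the fact that $\BL(S)$ separates points of $\CM(S)$, forces the limit to be independent of the chosen subsequences. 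The standard subsubsequence argument then promotes this to $\|\mu_n - \mu\|_\BL^* \to 0$ with $\mu := \lambda^+ - \lambda^-$.

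The tightness step is the heart of the proof and I would argue it by contradiction. Suppose $(|\mu_n|)$ is not tight: there exists $\eps_0 > 0$ such that for every compact $K \subset S$ some $|\mu_n|$ places mass at least $\eps_0$ outside $K$. Exploiting separability and completeness of $(S,d)$, I would inductively extract a subsequence $(\mu_{n_k})$ together with pairwise metrically separated closed sets $A_k \subset S$ lying outside a growing family of totally bounded "core" sets, such that $|\mu_{n_k}|(A_k) \geq \eps_0/2$, while arranging that $|\mu_{n_k}|\big(\bigcup_{j\neq k} A_j\big)$ is controlled. Applying the Hahn decomposition of $\mu_{n_k}$ restricted to $A_k$, either its positive or negative part on $A_k$ carries mass at least $\eps_0/4$. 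Build piecewise-linear Lipschitz bumps $\psi_k$ supported in $A_k$ with $\|\psi_k\|_\infty \leq 1$ and uniformly bounded Lipschitz constants (thanks to the metric separation between the $A_k$'s), with signs chosen so that $\pair{\mu_{n_k}}{\psi_k}$ alternates between values $\geq \eps_0/4$ and $\leq -\eps_0/4$. Then $f := \sum_k \psi_k$ lies in $\BL(S)$ and $\pair{\mu_{n_k}}{f}$ fails to converge, contradicting the hypothesis.

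The main obstacle is exactly this geometric construction of escape regions $A_k$ and bumps $\psi_k$. Three constraints have to be met simultaneously: metric separation of the $A_k$ must be arranged to keep the Lipschitz constant of $f = \sum_k \psi_k$ uniformly bounded; the Hahn decomposition must be invoked on each $A_k$ to extract genuinely signed mass that does not self-cancel under integration against $\psi_k$; and the subsequence must be thinned so that the tails $|\mu_{n_k}|\big(\bigcup_{j \neq k} A_j\big)$ are negligible, preventing cross-contributions from earlier or later bumps from spoiling the oscillation at stage $k$. This is the delicate geometric argument in the spirit of Szarek and collaborators referenced in the introduction. Completeness of $(S,d)$ enters essentially, both to invoke Prokhorov in the conclusion and to support the nested construction; indeed the statement genuinely fails on non-complete separable metric spaces.
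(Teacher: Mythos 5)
Your proof has a genuine gap at its very first structural step: the claim that the hypotheses force uniform tightness of $(|\mu_n|)$ is false. The paper's own illustrative example shows this: on $S=\RR$ take $\mu_n:=\delta_n-\delta_{n+\frac{1}{n}}$. Then $\|\mu_n\|_\TV=2$ and $\pair{\mu_n}{f}\to 0$ for every $f\in\BL(\RR)$, so all hypotheses of the theorem hold, yet $|\mu_n|=\delta_n+\delta_{n+\frac{1}{n}}$ escapes every compact set. This also pinpoints exactly where your contradiction argument breaks down: on the escape region $A_k$ you do have $|\mu_{n_k}|(A_k)\geq\eps_0/2$, and the Hahn decomposition does give you positive or negative mass at least $\eps_0/4$ there, but you cannot build a Lipschitz bump $\psi_k$ with uniformly bounded $\|\psi_k\|_\BL$ that \emph{sees} this mass, because the opposite-signed mass may sit at distance $o(1)$ from it and cancel the integral: in the example $|\pair{\mu_{n_k}}{\psi_k}|=|\psi_k(n_k)-\psi_k(n_k+\tfrac{1}{n_k})|\leq |\psi_k|_L/n_k\to 0$ no matter how you choose the signs. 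The cancellation is not a technicality to be arranged away by thinning the subsequence; it is an intrinsic obstruction, and the paper states explicitly that a reduction to the positive case via Jordan parts is not possible for this reason. (Your concluding Prokhorov/subsubsequence step would be fine \emph{if} tightness held, and is essentially how the paper finishes; the problem is entirely in the tightness claim.)

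What the paper actually proves tight is not $(|\mu_n|)$ but the \emph{remainders after removing a maximal asymptotically cancelling pair}. It considers all ways of writing $\mu_{m_n}^+\geq\beta\nu_{m_n}$ and $\mu_{m_n}^-\geq\beta\vartheta_{m_n}$ along a subsequence, with $\nu_{m_n},\vartheta_{m_n}$ probability measures satisfying $\|\nu_{m_n}-\vartheta_{m_n}\|_\BL^*\to 0$, and lets $\alpha$ be the supremum of the admissible $\beta$. The sequences $(\mu_{m_n}^+-\alpha_n\nu_{m_n})$ and $(\mu_{m_n}^--\alpha_n\vartheta_{m_n})$ are then shown to be tight: if one of them were not, a set-geometric construction (separated compact sets, Lemma \ref{L1_28.04.16}, and a sum of bumps as in your sketch, but applied to the \emph{remainder} measures and exploiting the convergence of $\pair{\mu_n}{f}$ on a single alternating test function) extracts an additional $\eps/2$ of asymptotically cancelling mass, contradicting the maximality of $\alpha$. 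Only after this extraction does Prokhorov's theorem apply, to the remainders, and the cancelling parts contribute nothing in $\|\cdot\|_\BL^*$ by construction. If you want to salvage your outline, you would need to insert this maximal-cancellation decomposition (or an equivalent device) before any tightness claim.
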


A self-contained, delicate proof of this result is deferred to Section \ref{sec:weak-implies-strong}. The condition that the sequence of measures must be bounded in total variation norm cannot be omitted as the following counterexample indicates.

\begin{counterexample}\label{counter:Schur prop}
Let $S=[0,1]$ with the Euclidean metric. Let $d\mu_n:= n\sin(2\pi n x)\,dx$, where $dx$ is Lebesgue measure on $S$. Then $\|\mu_n\|_\TV$ is unbounded. Let $g\in\BL(S)$ with $|g|_L\leq 1$. According to Rademacher's Theorem, $g$ is differentiable Lebesgue almost everywhere. Since $|g|_L\leq 1$, there exists $f\in L^\infty([0,1])$ such that for all $0\leq a<b\leq 1$,
\[
\int_a^b f(x)\,dx = g(b)-g(a).
\]
This yields
\[
\pair{\mu_n}{g} = \frac{1}{2\pi} \int_0^1 \cos(2\pi nx) f(x)\,dx.
\]
Since $f\in L^2([0,1])$, it follows from Bessel's Inequality that
\[
\lim_{n\to\infty}\ \int_0^1 \cos(2\pi nx) f(x)\,dx =0.
\]
So $\pair{\mu_n}{g}\to 0$ for all $g\in\BL(S)$. Now, let $g_n\in\BL(S)$ be the piecewise linear function that satisfies $g_n(0)=0=g_n(1)$,
\[
g_n\bigl(\mbox{$\frac{1+4i}{4n}$}\bigr) = \mbox{$\frac{1}{4n}$},\qquad 
g_n\bigl(\mbox{$\frac{3+4i}{4n}$}\bigr) = -\mbox{$\frac{1}{4n}$},\qquad
\mbox{for}\ i\in\NN,\ 0\leq i\leq n-1.
\]
Then $|g|_L=1$ and $\|g_n\|_\infty=\frac{1}{4n}$. An easy calculation shows that $\pair{\mu_n}{g_n}=\frac{1}{\pi^2}$ for all $n\in\NN$. Therefore $\|\mu_n\|_\BL^*$ cannot converge to zero as $n\to\infty$.
\end{counterexample}

Theorem \ref{thrm:weak-implies-strong convergence} has the following corollary. Here we denote by $\mathrm{U}_b(S)$ the Banach space of uniformly continuous bounded functions on $S$, equipped with the $\|\cdot\|_\infty$-norm. This result was originally obtained by Pachl \cite{Pachl:1979}, see also \cite{Pachl:2013}, Theorem 5.45.
\begin{corollary}
$\CM(S)$ is $\mathrm{U}_b(S)$-weakly sequentially complete.
\end{corollary}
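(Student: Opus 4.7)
The plan is to reduce the corollary to Theorem \ref{thrm:weak-implies-strong convergence} by verifying its two hypotheses for a $\mathrm{U}_b(S)$-weak Cauchy sequence $(\mu_n)$, and then to bootstrap from $\BL(S)$-weak convergence to $\mathrm{U}_b(S)$-weak convergence using that $\BL(S)$ is uniformly dense in $\mathrm{U}_b(S)$.

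First I would establish that $\sup_n \|\mu_n\|_\TV < \infty$ via the Banach--Steinhaus theorem applied to the functionals $T_n : \mathrm{U}_b(S) \to \RR$, $T_n(f) := \pair{\mu_n}{f}$. For this, I need the isometric identity $\|\mu\|_\TV = \|I_\mu|_{\mathrm{U}_b(S)}\|_{\mathrm{U}_b(S)^*}$ for each $\mu \in \CM(S)$. The nontrivial direction uses inner regularity of finite Borel measures on a metric space: given the Jordan decomposition $\mu = \mu^+ - \mu^-$ supported on disjoint Hahn sets $A_\pm$, pick closed $F_\pm \subset A_\pm$ with $\mu^\pm(A_\pm \setminus F_\pm) < \eps$, and form
\[
f_\eps(x) := \max\!\bigl(-1, \min\bigl(1, \tfrac{d(x,F_-)-d(x,F_+)}{\delta}\bigr)\bigr),
\]
with $\delta := \tfrac{1}{2}d(F_+,F_-) > 0$. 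Then $f_\eps \in \BL(S) \subset \mathrm{U}_b(S)$, $\|f_\eps\|_\infty \le 1$, $f_\eps \equiv 1$ on $F_+$ and $\equiv -1$ on $F_-$, which yields $\pair{\mu}{f_\eps} \ge \|\mu\|_\TV - 4\eps$. Since each sequence $(T_n(f))_n$ is convergent hence bounded, Banach--Steinhaus gives $\sup_n \|\mu_n\|_\TV = \sup_n \|T_n\| < \infty$.

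Next, since $\BL(S) \subset \mathrm{U}_b(S)$, the hypothesis implies that $\pair{\mu_n}{f}$ converges for every $f \in \BL(S)$. Combined with the total variation bound just established, Theorem \ref{thrm:weak-implies-strong convergence} delivers $\mu \in \CM(S)$ with $\|\mu_n - \mu\|_\BL^* \to 0$, and in particular $\pair{\mu_n}{f} \to \pair{\mu}{f}$ for all $f \in \BL(S)$.

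Finally I would extend the convergence to arbitrary $f \in \mathrm{U}_b(S)$ using uniform density of $\BL(S)$ in $\mathrm{U}_b(S)$: the inf-convolutions $f_\la(x) := \inf_{y \in S}\bigl(f(y) + \la\, d(x,y)\bigr)$ are $\la$-Lipschitz, bounded, and converge to $f$ uniformly as $\la \to \infty$ (this is where uniform continuity of $f$ is used). Given $\eps > 0$, pick $g \in \BL(S)$ with $\|f - g\|_\infty < \eps$, and write
\[
|\pair{\mu_n - \mu}{f}| \;\le\; |\pair{\mu_n - \mu}{g}| \,+\, \eps\bigl(\|\mu_n\|_\TV + \|\mu\|_\TV\bigr).
\]
The first term vanishes as $n \to \infty$ by the previous step, and $M := \sup_n \|\mu_n\|_\TV + \|\mu\|_\TV < \infty$, so $\limsup_n |\pair{\mu_n - \mu}{f}| \le \eps M$. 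Letting $\eps \to 0$ finishes the argument. The only real subtlety, and the main obstacle, is the identification $\|\mu\|_{\mathrm{U}_b(S)^*} = \|\mu\|_\TV$ needed for the uniform boundedness step; once this is in hand, the remainder is a straightforward assembly of the Schur-like theorem with a density-and-approximation argument.
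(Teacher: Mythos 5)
Your argument is the same as the paper's: Uniform Boundedness applied to the functionals on $\mathrm{U}_b(S)$ to get the total variation bound, Theorem \ref{thrm:weak-implies-strong convergence} for $\BL(S)$-weak convergence, then uniform density of $\BL(S)$ in $\mathrm{U}_b(S)$ together with the TV bound to upgrade to $\mathrm{U}_b(S)$-weak convergence; you merely fill in the two steps the paper delegates to citations (the identity $\|\mu\|_{\mathrm{U}_b(S)^*}=\|\mu\|_\TV$ and Dudley's density lemma). One small correction in your norm identification: two disjoint closed subsets of a Polish space can be at distance zero, so you should take $F_\pm$ \emph{compact} (inner regularity by compact sets, i.e.\ Ulam's lemma, which is available since $S$ is Polish); then $d(F_+,F_-)>0$ and your $f_\eps$ is well defined.
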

\begin{proof}
Let $(\mu_n)\subset\CM(S)$ be such that $\pair{\mu_n}{f}$ is Cauchy for every $f\in\mathrm{U}_b(S)$. Then it follows from the Uniform Boundedness Principle that the sequence $(\mu_n)$ is bounded in $\mathrm{U}_b(S)^*$. Consequently, $\sup_n\|\mu_n\|_\TV = M <\infty$. Theorem \ref{thrm:weak-implies-strong convergence} implies that there exists $\mu\in\CM(S)$ such that $\pair{\mu_n}{f}\to\pair{\mu}{f}$ for every $f\in \BL(S)$. Since $\BL(S)$ is dense in $\mathrm{U}_b(S)$ (\cite{Dudley:1966}, Lemma 8) and $\|\mu_n\|_\TV\leq M$ for all $n$, the convergence result holds for every $f\in\mathrm{U}_b(S)$.
\end{proof}

\begin{remark}
Theorem \ref{thrm:weak-implies-strong convergence} is related to results on asymptotic proximity of sequences of distributions, e.g. see \cite{Davydov-Rotar:2009}, Theorem 4. In that setting $\mu_n=P_n-Q_n$, where $P_n$ and $Q_n$ are probability measures. These are asymptotically proximate (for the $\|\cdot\|_\BL^*$-norm; other norms are considered as well) if $\|P_n-Q_n\|_\BL^*\to 0$. So one knows in advance that $\pair{\mu_n}{f}\to 0$. That is, the limit measure $\mu$ exists: $\mu=0$. Combining such a result with the $\mathrm{U}_b(S)$-weak sequential completeness of $\CM(S)$ implies Theorem \ref{thrm:weak-implies-strong convergence}. We present, in Section \ref{sec:weak-implies-strong}, an independent proof using completely different methods, that results in both the completeness result and a particular case of the mentioned asymptotic proximity result. The limit measure is there obtained through a delicate tightness argument, essentially.
\end{remark}

The statement of the particular case in which all measures are positive seems novel too: 
\begin{theorem}\label{thrm:weak-implies-strong positive}
Let $(S,d)$ be a complete separable metric space. Let $(\mu_n)\subset\CM^+(S)$ be such that for every $f\in \BL(S)$, $\pair{\mu_n}{f}$ converges. Then $\pair{\mu_n}{f}$ converges for every $f\in C_b(S)$. In particular, there exists $\mu\in\CM^+(S)$ such that $\|\mu_n-\mu\|_\BL^*\to 0$.  
\end{theorem}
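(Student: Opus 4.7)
The plan is to bootstrap from Theorem \ref{thrm:weak-implies-strong convergence}. Since $\mu_n \geq 0$, the total-variation hypothesis of that theorem is automatic: $\mathbf{1}_S \in \BL(S)$, so $\|\mu_n\|_\TV = \mu_n(S) = \pair{\mu_n}{\mathbf{1}_S}$ converges, hence is bounded in $n$. Theorem \ref{thrm:weak-implies-strong convergence} then delivers $\mu \in \CM(S)$ with $\|\mu_n - \mu\|_\BL^* \to 0$. For the positivity $\mu \in \CM^+(S)$: every nonnegative $f \in \BL(S)$ satisfies $\pair{\mu}{f} = \lim_n \pair{\mu_n}{f} \geq 0$; applying this to the BL-approximants $f_k(x) := \max(0, 1 - k\, d(x, F))$, which decrease pointwise to $\mathbf{1}_F$ for any closed $F$, dominated convergence yields $\mu(F) \geq 0$ on every closed set, and inner regularity of the Jordan components on a Polish space then forces $\mu^- = 0$.

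The remaining step is to upgrade BL-convergence to $C_b(S)$-convergence, which I would do via uniform tightness of $\{\mu_n\}$. Since $\mu$ is tight on the Polish space $S$ (Ulam), for $\eps > 0$ pick compact $K_\eps$ with $\mu(S \setminus K_\eps) < \eps$; the cutoff $\psi_\delta(x) := \max(0, 1 - \delta^{-1} d(x, K_\eps)) \in \BL(S)$ sandwiches $\mathbf{1}_{K_\eps} \leq \psi_\delta \leq \mathbf{1}_{K_\eps^\delta}$, whence
\[
\limsup_n \mu_n(S \setminus K_\eps^\delta) \;\leq\; \lim_n \pair{\mu_n}{\mathbf{1}_S - \psi_\delta} \;=\; \pair{\mu}{\mathbf{1}_S - \psi_\delta} \;\leq\; \mu(S \setminus K_\eps) \;<\; \eps.
\]
A standard Ulam-style diagonal argument over a sequence $\eps_k \downarrow 0$ with a totally-bounded intersection $K := \bigcap_k K_{\eps_k}^{\delta_k}$ (hence compact in complete $S$), combined with the individual tightness of the finitely many initial $\mu_n$, upgrades this to honest uniform tightness of the whole sequence.

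Finally, for $f \in C_b(S)$ and $\eps > 0$, pick a compact $K$ with $\mu_n(S \setminus K), \mu(S \setminus K) < \eps$ for all $n$; approximate $f|_K$ uniformly within $\eps$ by a Lipschitz $g_K \in \Lip(K)$ (Stone-Weierstrass, since Lipschitz functions form a point-separating unital subalgebra of $C(K)$); and extend $g_K$ to $g \in \BL(S)$ via McShane-Whitney followed by truncation at $\|f\|_\infty + \eps$. The splitting
\[
\pair{\mu_n - \mu}{f} \;=\; \pair{\mu_n}{f - g} + \pair{\mu_n - \mu}{g} + \pair{\mu}{g - f}
\]
yields $\limsup_n |\pair{\mu_n - \mu}{f}| \leq C\eps$: the middle term vanishes as $n \to \infty$ by BL-convergence (for the fixed BL function $g$), while each outer term is bounded by $\eps \cdot \sup_n \|\mu_n\|_\TV + 2(\|f\|_\infty + \eps)\eps$ via uniform $\eps$-closeness on $K$ plus the tail estimate. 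The main obstacle is the tightness step: BL test functions do not directly control tail mass, so transferring the intrinsic tightness of the limit $\mu$ to uniform tightness of the whole sequence requires the above cutoff-and-diagonalization argument, which essentially depends on positivity.
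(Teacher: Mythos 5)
Your argument is circular within this paper's logical architecture. You derive the positive case by invoking Theorem \ref{thrm:weak-implies-strong convergence} for signed measures, but the paper proves that theorem \emph{from} Theorem \ref{thrm:weak-implies-strong positive}: Section \ref{sec:weak-implies-strong} first establishes the positive case by a self-contained tightness argument, and the signed case then reduces (via the decomposition $\mu_{m_n}^{+}\geq\alpha_n\nu_{m_n}$, $\mu_{m_n}^{-}\geq\alpha_n\vartheta_{m_n}$ and a Prokhorov step) to the positive case, citing Theorem \ref{thrm:weak-implies-strong positive} explicitly to pass from $C_b(S)$-weak to $\|\cdot\|_\BL^*$-convergence; Remark \ref{rem:alternative approaches} states this dependence in so many words. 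What your proposal is therefore missing is precisely the substance of the theorem: a way to produce the limit measure, or equivalently uniform tightness of $(\mu_n)$, \emph{without} already having a Schur-like property in hand. The paper does this by contradiction: after normalizing to probability measures, if $\{\mu_n\}$ is not tight, Lemma \ref{lem:non-tight sequence prop} produces $\eps>0$ and compact sets $K_{n}$ with $\mu_{n}(K_{n})\geq\eps$ that are mutually $\eps$-separated; Lemma \ref{L1_28.04.16} extracts a further subsequence on which each measure puts mass less than $\eps/2$ on the union of the other enlarged sets $K_{n_j}^{\eps/3}$; and the bounded Lipschitz function $f=\sum_i f_i$, with $f_i$ equal to $1$ on $K_{n_{2i}}$ and supported in $K_{n_{2i}}^{\eps/3}$, then makes $\pair{\mu_{n_j}}{f}$ oscillate between at least $\eps$ and less than $\eps/2$, contradicting convergence. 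None of this appears in your proposal.

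That said, the steps downstream of the (illegitimately obtained) limit are correct and worth noting. Deducing uniform tightness of a sequence of positive measures from BL-weak convergence to a known tight limit, via the cutoffs $\psi_\delta$ and a diagonal/total-boundedness argument, is sound (it is the classical converse-Prokhorov direction), and your upgrade from BL-convergence to $C_b(S)$-convergence by restricting to a compact set, approximating uniformly by Lipschitz functions there, and extending by McShane is essentially the same device the paper itself uses once tightness is available. If Theorem \ref{thrm:weak-implies-strong convergence} were established by an independent route (e.g.\ via Pachl's results, as sketched in Remark \ref{rem:alternative approaches}), your derivation would be a legitimate alternative; as a proof inside this paper it cannot stand.
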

Its proof is simpler compared to that of Theorem \ref{thrm:weak-implies-strong convergence}. In Section \ref{sec:weak-implies-strong} we shall present a self-contained proof of this result as well, based on a `set-geometric' argument that is (essentially) also used to prove Theorem \ref{thrm:weak-implies-strong convergence}.

As it turned out, the proof for signed measures cannot be reduced straightforwardly to the result for positive measures. This is mainly caused by the complication, that for a sequence $(\mu_n)$ of signed measures such that $\pair{\mu_n}{f}$ that is convergent for every $f\in\BL(S)$, it need not hold that $\pair{\mu^+_n}{f}$ and $\pair{\mu^-_n}{f}$ converge for every $f\in\BL(S)$. Take for example on $S=\RR$ with the usual Euclidean metric $\mu_n:=\delta_n-\delta_{n+\frac{1}{n}}$. Then $\pair{\mu_n}{f}\to 0$ for every $f\in\BL(\RR)$. However, $\mu^+_n=\delta_n$ and $\mu^-_n=\delta_{n+\frac{1}{n}}$, so $\pair{\mu^\pm_n}{f}$ will not converge for every $f\in\BL(\RR)$. Thus, an immediate reduction to positive measures is not possible.

The pairing of measures with bounded Lipschitz functions is precisely what is important for the study of Markov operators and semigroups that have particular equicontinuity properties, as we shall discuss next.

\subsection{Equicontinuous families of Markov operators}
\label{sec:equicontinous}

A {\it Markov operator} on (measures on) $S$ is a map $P:\CM^+(S)\to\CM^+(S)$ such that:
\begin{enumerate}
\item $P(\mu+\nu)= P\mu + P\nu$ and $P(r\mu) = rP\mu$ for all $\mu,\nu\in\CM^+(S)$ and $r\geq 0$,
\item $(P\mu)(S) = \mu(S)$ for all $\mu\in\CM^+(S)$.
\end{enumerate}
In particular, a Markov operator leaves invariant the convex set $\CP(S)$ of probability measures in $\CM^+(S)$. Let $\mathrm{BM}(S)$ be the vector space of bounded Borel measurable real-valued functions on $S$. A Markov operator is called {\it regular} if there exists a linear map $U:\mathrm{BM}(S)\to\mathrm{BM}(S)$, the {\it dual operator}, such that
\[
\pair{P\mu}{f} = \pair{\mu}{Uf}\qquad \mbox{for all}\ \mu\in\CM^+(S),\ f\in\mathrm{BM}(S).
\]
A regular Markov operator $P$ is {\it Feller} if its dual operator maps $C_b(S)$ into itself. Equivalently, $P$ is continuous for the $\|\cdot\|_\BL^*$-norm topology (cf. e.g. \cite{Hille-Worm:2009a} Lemma 3.3 and \cite{Worm-thesis:2010} Lemma 3.3.2). 

Regular Markov operators on measures appear naturally e.g. in the theory of Iterated Function Systems \cite{Barnsley_ea:1988, Lasota-Yorke:1994} and the study of deterministic flows by their lift to measures \cite{Piccoli-Tosin:2011, Evers_ea:2015}. Dual Markov operators on $C_b(S)$ (or a suitable linear subspace) are encountered naturally in the study of stochastic differential equations \cite{DaPrato-Zabczyk:1992, KPS}. Which specific viewpoint in this duality is used, is often determined by technical considerations and the mathematical problems that are considered. 

Markov operators and semigroups with equicontinuity properties (called the `{\it e-property}') have convenient properties concerning existence, uniqueness and asymptotic stability of invariant measures, see e.g. \cite{HHS:2016, KPS, SSU:2010, Szarek-Worm:2012, Worm-thesis:2010}. After having defined these properties precisely below, we show by means of Theorem \ref{thrm:weak-implies-strong convergence} that a dual viewpoint exists for equicontinuity too, in Theorem \ref{thrm:equicvalence equicont}. In subsequent work further consequences of this result for the theory and application of equicontinuous families of Markov operators are examined. Some results in this direction were also discussed in parts of \cite{Worm-thesis:2010}, Chapter 7.

Let $T$ be a topological space and $(S',d')$ a metric space. A family of functions $\CE\subset C(T,S')$ is {\it equicontinuous at $t_0\in T$} if for every $\eps>0$ there exists an open neighbourhood $U_\eps$ of $t_0$ such that 
\[
d'(f(t),f(t_0))<\eps\quad \mbox{for all}\ f\in\CE,\ t\in U_\eps.
\]
$\CE$ is {\it equicontinuous} if it is equicontinuous at every point of $T$.

Following Szarek {\it et al.} \cite{KPS,SSU:2010}, a family $(P_\lambda)_{\lambda\in\Lambda}$ of regular Markov operators has the {\it e-property} if for each $f\in\BL(S)$ the family $\{U_\lambda f: \lambda\in\Lambda\}$ is equicontinuous in $C_b(S)$. In particular one may consider the family of iterates of a single Markov operator $P$: $(P^n)_{n\in\NN}$, or Markov semigroups $(P_t)_{t\in\RR^+}$, where each $P_t$ is a regular Markov operator and $P_0=I$, $P_t P_s = P_{t+s}$.   

Our main result on equicontinuous families of Markov operators is

\begin{theorem}\label{thrm:equicvalence equicont}
Let $\{P_\lambda:\lambda\in\Lambda\}$ be a family of regular Markov operators on a complete separable metric space $(S,d)$. Let $U_\lambda$ be the dual Markov operator of $P_\lambda$. The following statements are equivalent:
\begin{enumerate}
\item $\{U_\lambda f: \lambda\in \Lambda\}$ is equicontinuous in $C_b(S)$ for every $f\in\BL(S)$.
\item $\{P_\lambda:\lambda\in\Lambda\}$ is equicontinuous in $C(\CM^+(S)_\BL,\CM^+(S)_\BL)$,
\item $\{P_\lambda:\lambda\in\Lambda\}$ is equicontinuous in $C(\CP(S)_{\mathrm{weak}},\CP(S)_\BL)$
\end{enumerate}
\end{theorem}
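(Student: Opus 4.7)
The strategy is to establish (ii) $\Rightarrow$ (i) and the equivalence (ii) $\Leftrightarrow$ (iii) by direct arguments, and then obtain (i) $\Rightarrow$ (iii) as the main step using the Schur-like property (Theorem \ref{thrm:weak-implies-strong convergence}). Each of $\CM^+(S)_\BL$, $\CP(S)_\BL$ and $\CP(S)_{\mathrm{weak}}$ is metrizable (the last because $S$ is Polish), so equicontinuity of $\{P_\lambda\}$ at $\mu_0$ is equivalent to the sequential statement that $P_{\lambda_n}\mu_n \to P_{\lambda_n}\mu_0$ in the target norm whenever $\mu_n \to \mu_0$ in the source topology and $(\lambda_n) \subset \Lambda$ is arbitrary.

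For (ii) $\Rightarrow$ (i) I would test equicontinuity against Dirac masses: since $\|\delta_{x_n} - \delta_{x_0}\|_\BL^* \leq \min(d(x_n, x_0), 2) \to 0$ whenever $x_n \to x_0$, (ii) yields $\|P_{\lambda_n}\delta_{x_n} - P_{\lambda_n}\delta_{x_0}\|_\BL^* \to 0$, and the elementary estimate $|U_{\lambda_n} f(x_n) - U_{\lambda_n} f(x_0)| = |\pair{P_{\lambda_n}(\delta_{x_n} - \delta_{x_0})}{f}| \leq \|f\|_\BL \|P_{\lambda_n}\delta_{x_n} - P_{\lambda_n}\delta_{x_0}\|_\BL^*$ produces the equicontinuity of $\{U_\lambda f\}$ at $x_0$. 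For (ii) $\Leftrightarrow$ (iii) I would use Dudley's theorem that on $\CP(S)$ the $\|\cdot\|_\BL^*$-topology coincides with the weak topology: (ii) $\Rightarrow$ (iii) then follows by restriction to $\CP(S)$, and for (iii) $\Rightarrow$ (ii) I would normalize by writing $\mu_n = c_n \tilde\mu_n$ with $c_n := \mu_n(S) \to c_0 := \mu_0(S)$ (obtained by pairing with $1 \in \BL(S)$); when $c_0 > 0$, $\tilde\mu_n \to \tilde\mu_0$ weakly in $\CP(S)$, so (iii) together with positive homogeneity of $P_{\lambda_n}$ and $c_n \to c_0$ gives the desired convergence, while the case $c_0 = 0$ follows from $\|P_{\lambda_n}\mu_n\|_\TV = c_n \to 0$.

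The main work is (i) $\Rightarrow$ (iii). Given $\mu_n \to \mu_0$ weakly in $\CP(S)$ and arbitrary $(\lambda_n)$, I set $\nu_n := P_{\lambda_n}\mu_n - P_{\lambda_n}\mu_0 \in \CM(S)$, which is bounded in total variation. By Theorem \ref{thrm:weak-implies-strong convergence} it suffices to show that $\pair{\nu_n}{f} = \pair{\mu_n - \mu_0}{U_{\lambda_n}f} \to 0$ for every $f \in \BL(S)$, because the theorem then produces a limit measure $\nu \in \CM(S)$ satisfying $\pair{\nu}{f} = 0$ for all $f \in \BL(S)$, hence $\nu = 0$, and so $\|\nu_n\|_\BL^* \to 0$, which is the sequential form of (iii). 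The pointwise-in-$f$ convergence will rest on an auxiliary lemma, which I expect to be the main technical obstacle: \emph{if $\{g_\lambda : \lambda \in \Lambda\} \subset C_b(S)$ is equicontinuous and uniformly bounded, and $(\sigma_n) \subset \CM^+(S)$ is uniformly tight with $\sigma_n \to \sigma_0$ weakly, then $\sup_\lambda |\pair{\sigma_n - \sigma_0}{g_\lambda}| \to 0$.} This applies to $g_\lambda := U_\lambda f$ (uniformly bounded by $\|f\|_\infty$, equicontinuous by (i)) and $\sigma_n := \mu_n$ (tight by Prokhorov's theorem, which uses that $S$ is Polish). To prove the lemma I would, for given $\eps > 0$, use tightness to choose a compact $K \subset S$ absorbing all but $\eps$ mass, apply Arzel\`a-Ascoli to the equicontinuous, uniformly bounded family $\{g_\lambda|_K\}$ to obtain a finite $\eps$-net $\{h_1,\ldots,h_N\} \subset C(K)$, extend each $h_i$ to $\tilde h_i \in C_b(S)$ with a uniform bound on $\|\tilde h_i\|_\infty$ via Tietze, and decompose $\pair{\sigma_n - \sigma_0}{g_\lambda} = \pair{\sigma_n - \sigma_0}{g_\lambda - \tilde h_{i(\lambda)}} + \pair{\sigma_n - \sigma_0}{\tilde h_{i(\lambda)}}$, bounding the first summand by $\eps$ on $K$ and by uniform tightness on $S \setminus K$, and the second by the finite maximum over $i$ of $|\pair{\sigma_n - \sigma_0}{\tilde h_i}|$, which vanishes by weak convergence.
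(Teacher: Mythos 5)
Your proposal is correct and follows essentially the same route as the paper: the heart of both arguments is that equicontinuity of $\{U_\lambda f:\lambda\in\Lambda\}$ together with weak convergence of the positive measures $\mu_n\to\mu_0$ forces $\pair{\mu_n-\mu_0}{U_{\lambda_n}f}\to 0$ for each $f\in\BL(S)$ (the paper cites Dudley's Theorem 7 for this uniform convergence over equicontinuous, uniformly bounded families, where you re-prove it by hand via tightness, Arzel\`a--Ascoli and Tietze), after which Theorem \ref{thrm:weak-implies-strong convergence} upgrades the $\BL(S)$-pointwise convergence of $P_{\lambda_n}\mu_n-P_{\lambda_n}\mu_0$ to convergence in $\|\cdot\|_\BL^*$. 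The only organizational difference is that you close the cycle as (i)$\Rightarrow$(iii)$\Rightarrow$(ii)$\Rightarrow$(i), inserting a normalization step for (iii)$\Rightarrow$(ii), whereas the paper proves (i)$\Rightarrow$(ii) directly (by contradiction, for general positive measures) and obtains (ii)$\Rightarrow$(iii) by restriction; both arrangements are sound.
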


\begin{proof}
{\it (i) $\Rightarrow$ (ii).}\ Assume on the contrary that $\{P_\lambda:\lambda\in\lambda\}$ is not an equicontinuous family of maps. Then there exists a point $\mu_0\in \CM^+(S)$ at which this family is not equicontinuous. Hence there exists $\eps_0>0$ such that for every $k\in\NN$ there are $\lambda_k\in\Lambda$ and $\mu_k\in\CM^+(S)$ such that
\begin{equation}\label{eq:contradictive assumption}
\|\mu_k-\mu_0\|_\BL^*<\mbox{$\frac{1}{k}$} \quad\mbox{and}\quad 
\|P_{\lambda_k}\mu_k-P_{\lambda_k}\mu_0\|_\BL^*\geq\eps_0
\qquad\mbox{for all}\ k\in\NN.
\end{equation}
Because the measures $\mu_k$ are positive and the $\|\cdot\|_\BL^*$-norm metrizes the $C_b(S)$-weak topology on $\CM^+(S)$ (cf. \cite{Dudley:1966}, Theorem 18), $\pair{\mu_k}{f}\to\pair{\mu_0}{f}$ for every $f\in C_b(S)$. According to \cite{Dudley:1966}, Theorem 7, this convergence is uniform on any equicontinuous and uniformly bounded subset $\CE$ of $C_b(S)$. By assumption, $\CM_f:=\{U_{\lambda_k}f:k\in\NN\}$ is such a family for every $f\in\BL(S)$. Therefore
\begin{equation}\label{eq:conv to zero}
|\pair{P_{\lambda_k}\mu_k-P_{\lambda_k}\mu_0}{f}| = |\pair{\mu_k-\mu_0}{U_{\lambda_k}f}|\to 0
\end{equation}
as $k\to\infty$ for every $f\in\BL(S)$. Since for positive measures $\mu$ one has $\|\mu\|_\TV= \|\mu\|_\BL^*$, one obtains
\[
\bigl| \|\mu_k\|_\TV - \|\mu_0\|_\TV\bigr| \leq \|\mu_k-\mu_0\|_\BL^*\to 0.
\]
So $m_0:=\sup_{k\geq 1}\|\mu_k\|_\TV<\infty$. Moreover, 
\[
\|P_{\lambda_k}\mu_k-P_{\lambda_k}\mu_0\|_\TV \leq \|P_{\lambda_k}\mu_k\|_\TV + \|P_{\lambda_k}\mu_0\|_\TV \leq \|\mu_k\|_\TV + \|\mu_0\|_\TV \leq m_0+\|\mu_0\|_\TV.
\]
Theorem \ref{thrm:weak-implies-strong convergence} and \eqref{eq:conv to zero} yields that $\|P_{\lambda_k}\mu_k-P_{\lambda_k}\mu_0\|_\BL^*\to 0$ as $k\to \infty$. This contradicts the second property in \eqref{eq:contradictive assumption}.\\
{\it (ii) $\Rightarrow$ (iii).}\ Follows immediately by restriction of the Markov operators $P_\Lambda$ to $\CP(S)$.\\
{\it (iii) $\Rightarrow$ (i).}\ Let $f\in\BL(S)$ and $x_0\in S$. Let $\eps>0$. Since $\{P_\lambda:\lambda\in\Lambda\}$ is equicontinuous at $\delta_{x_0}$ there exists an open neighbourhood $V$ of $\delta_{x_0}$ in $\CP(S)_{\mathrm{weak}}$ such that 
\[
\|P_\lambda\delta_{x_0} - P_\lambda\mu\|_\BL^* <\eps/(1+\|f\|_\BL)\quad\mbox{for all}\  \lambda\in\Lambda\ \mbox{and}\ \mu\in U_0.
\]
Since the map $x\mapsto\delta_x:S\to\CP(S)_{\mathrm{weak}}$ is continuous, there exists an open neighbourhood $V_0$ of $x_0$ in $S$ such that $\delta_x\in V$ for all $x\in V_0$. Then
\[
|U_\lambda f(x)-U_\lambda f(x_0)| = |\pair{P_\lambda \delta_x - P_\lambda\delta_{x_0}}{f}|\leq \frac{\eps}{1+\|f\|_\BL}\cdot\|f\|_\BL<\eps
\]
for all $x\in V_0$ and $\lambda\in\Lambda$.
\end{proof}

A particular class of examples of Markov operators and semigroups is furnished by the lift of a map or semigroup $(\phi_t)_{t\geq 0}$ of measurable maps $\phi_t:S\to S$ to measures on $S$ by means of push-forward:
\[
P^\phi_t\mu (E) := \mu\bigl( \phi_t^{-1}(E)\bigr)
\]
for every Borel set $E$ of $S$ and $\mu\in\CM^+(S)$. A consequence of Theorem \ref{thrm:equicvalence equicont} is:

\begin{proposition}\label{prop:equicont lift}
Let $(S,d)$ be a complete separable metric space and let $(\phi_t)_{t\geq 0}$ be a semigroup of Borel measurable transformations of $S$. Then $P^\phi_t$ is a regular Markov operator for each $t\geq 0$. Moreover, $(P^\phi_t)_{t\geq 0}$ is equicontinuous in $C(\CM^+(S)_\BL,\CM^+(S)_\BL)$ if and only if $(\phi_t)_{t\geq 0}$ is equicontinuous in $C(S,S)$.
\end{proposition}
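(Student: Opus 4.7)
The plan is to handle the Markov operator assertion immediately, and then reduce the equicontinuity equivalence to Theorem \ref{thrm:equicvalence equicont} by establishing its condition (i) in the easy direction and exploiting its condition (iii) in the hard direction. The main obstacle will be recovering a pointwise metric estimate for $(\phi_t)$ from a Dudley-norm estimate between Dirac masses.

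For the first assertion, push-forward is linear and positive, and preserves total mass since $\phi_t^{-1}(S)=S$, so each $P^\phi_t$ is a Markov operator on $\CM^+(S)$. The change-of-variables identity $\pair{P^\phi_t\mu}{f}=\pair{\mu}{f\circ\phi_t}$ for $f\in\mathrm{BM}(S)$ identifies the dual operator as $U^\phi_t f = f\circ\phi_t$, which maps $\mathrm{BM}(S)$ into itself by Borel measurability of $\phi_t$; hence $P^\phi_t$ is regular.

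For the forward implication, suppose $(\phi_t)_{t\geq 0}$ is equicontinuous in $C(S,S)$; in particular each $\phi_t$ is continuous, so $U^\phi_t f = f\circ\phi_t \in C_b(S)$ for every $f\in\BL(S)$. Given such an $f$, a point $x_0\in S$ and $\eps>0$, I would choose a neighborhood $V_0$ of $x_0$ with $d(\phi_t(x),\phi_t(x_0))<\eps/(1+|f|_L)$ for all $x\in V_0$ and $t\geq 0$; then
\[
|U^\phi_t f(x) - U^\phi_t f(x_0)|\leq |f|_L\, d(\phi_t(x),\phi_t(x_0)) < \eps,
\]
so $\{U^\phi_t f : t\geq 0\}$ is equicontinuous in $C_b(S)$. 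Condition (i) of Theorem \ref{thrm:equicvalence equicont} is satisfied, and condition (ii) follows.

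For the converse, assume $(P^\phi_t)_{t\geq 0}$ is equicontinuous in $C(\CM^+(S)_\BL,\CM^+(S)_\BL)$. By Theorem \ref{thrm:equicvalence equicont} this is equivalent to condition (iii), that is, equicontinuity in $C(\CP(S)_{\mathrm{weak}},\CP(S)_\BL)$. Fix $x_0\in S$ and $\eps\in(0,1)$. Equicontinuity at $\delta_{x_0}$ with tolerance $\eps/2$ furnishes a weak neighborhood $V$ of $\delta_{x_0}$ such that $\|P^\phi_t\nu - P^\phi_t\delta_{x_0}\|_\BL^* < \eps/2$ for all $\nu\in V$ and $t\geq 0$. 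Continuity of $x\mapsto \delta_x : S\to \CP(S)_{\mathrm{weak}}$ then produces a neighborhood $V_0$ of $x_0$ with $\delta_x\in V$ for $x\in V_0$, whence
\[
\|\delta_{\phi_t(x)} - \delta_{\phi_t(x_0)}\|_\BL^* < \eps/2 \quad \text{for all } x\in V_0,\ t\geq 0.
\]
To convert this into a metric estimate I would test against $g_t(y):=\min(d(y,\phi_t(x_0)),1)$, which satisfies $\|g_t\|_\infty\leq 1$ and $|g_t|_L\leq 1$, hence $\|g_t\|_\BL\leq 2$. Pairing gives $\min(d(\phi_t(x),\phi_t(x_0)),1) = |\pair{\delta_{\phi_t(x)}-\delta_{\phi_t(x_0)}}{g_t}| \leq 2\cdot\eps/2 = \eps <1$, so $d(\phi_t(x),\phi_t(x_0))\leq\eps$, establishing equicontinuity of $(\phi_t)$ at $x_0$. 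The hard part is exactly this recovery step: the Dudley norm only controls the bounded metric $d\wedge 1$ between Dirac masses (not $d$ itself), and the relevant base point $\phi_t(x_0)$ changes with $t$. Both issues are dissolved by allowing the test function $g_t$ to depend on $t$, which is legitimate because the estimate on the norm is uniform in $t$, and by restricting to $\eps<1$, which costs nothing in a local statement such as equicontinuity.
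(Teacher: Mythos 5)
Your proposal is correct, and its overall architecture matches the paper's: regularity via $U^\phi_t f=f\circ\phi_t$, the direction ``$(\phi_t)$ equicontinuous $\Rightarrow$ $(P^\phi_t)$ equicontinuous'' by verifying condition $(i)$ of Theorem \ref{thrm:equicvalence equicont} with the estimate $|U^\phi_tf(x)-U^\phi_tf(x_0)|\leq |f|_L\,d(\phi_t(x),\phi_t(x_0))$, and the converse by testing the operator estimate on Dirac masses. The one genuine difference is in how you recover $d(\phi_t(x),\phi_t(x_0))<\eps$ from $\|\delta_{\phi_t(x)}-\delta_{\phi_t(x_0)}\|_\BL^*<\eps/2$: the paper invokes the exact formula $\|\delta_a-\delta_b\|_\BL^*=h(d(a,b))$ with $h(s)=2s/(2+s)$ (\cite{Hille-Worm:2009}, Lemma 3.5) and inverts the monotone function $h$, whereas you derive the one-sided bound $d(a,b)\wedge 1\leq \|g\|_\BL\,\|\delta_a-\delta_b\|_\BL^*\leq 2\|\delta_a-\delta_b\|_\BL^*$ by pairing with the explicit test function $g(y)=d(y,b)\wedge 1$ and restrict to $\eps<1$. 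Your version is self-contained (no citation needed) at the cost of a factor of $2$ and the harmless restriction on $\eps$; the paper's is sharper but leans on an external lemma. Your detour through condition $(iii)$ of Theorem \ref{thrm:equicvalence equicont} is also unnecessary --- the paper works directly with equicontinuity at $\delta_{x_0}$ in $\CM^+(S)_\BL$ and the continuity of $x\mapsto\delta_x$ into the norm topology --- but it is not wrong, since the pullback of a weak neighbourhood under the weakly continuous embedding $x\mapsto\delta_x$ is equally open.
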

\begin{proof} 
The regularity of $P^\phi_t$ is immediate, as $U^\phi_t f= f\circ\phi_t$.\\
`$\Rightarrow$':\ Let $x_0\in S$ and $\eps>0$. Define $h(x):=2x/(2+x)$ and put $\eps':=h(\eps)$. By equicontinuity of $(P^\phi_t)_{t\geq 0}$ at $\delta_{x_0}$, there exists and open neighbourhood $U$ of $\delta_{x_0}$ in $\CM^+(S)_\BL$ such that
\[
\| P^\phi_t \mu - P^\phi_t \delta_{x_0}\|_\BL^* <\eps'
\]
for all $t\geq 0$ and $\mu \in U$. Because the map $\delta:x\mapsto \delta_x:S\to\CM^+(S)_\BL$ is continuous, $U_0:=\delta^{-1}(U)$ is open in $S$. It contains $x_0$. Moreover,
\[
\| P^\phi_t \delta_x - P^\phi_t \delta_{x_0}\|_\BL^* = \|\delta_{\phi_t(x)} - \delta_{\phi_t(x_0)}\|_\BL^* = h\bigl(d(\phi_t(x),\phi_t(x_0)\bigr) <\eps'
\]
for all $x\in U_0$ and $t\geq 0$ (see \cite{Hille-Worm:2009} Lemma 3.5). Because $h$ is monotone increasing, 
\[
d\bigl(\phi_t(x),\phi_t(x_0)\bigr) < \eps\qquad \mbox{for all}\ x\in U_0,\ t\geq 0.
\]
`$\Leftarrow$':\ This part involves Theorem \ref{thrm:equicvalence equicont}. Let $f\in \BL(S)$. Let $U_t$ be the dual operator of $P_t$. Then for all $x,x_0\in S$,
\[
|U_t f(x) - U_t f(x_0)| = |f(\phi_t(x))-f(\phi_t(x_0))| \leq |f|_L d(\phi_t(x),\phi_t(x_0)),
\]
from which the equicontinuity of $\{U_tf: t\geq 0\}$ follows. The result is obtained by applying Theorem \ref{thrm:equicvalence equicont}.
\end{proof}

\subsection{Coincidence of weak and norm topologies}
\label{sec:coincidence topologies}

A further consequence of Theorem \ref{thrm:weak-implies-strong convergence} is

\begin{theorem}\label{thrm:coincidence topologies}
Let $(S,d)$ be a complete separable metric space and let $M\subset\CM(S)$ be such that $m:=\sup_{\mu\in M}\|\mu\|_\TV<\infty$. If the restriction of the $\sigma(\CM(S),\BL(S))$-weak topology to $M$ is first countable, then this topology coincides with the restriction of the $\|\cdot\|_\BL^*$-norm topology to $M$. 
\end{theorem}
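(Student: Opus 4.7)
The plan is to show equality of the two topologies on $M$ by proving each is finer than the other. One inclusion is trivial: for every $\mu,\nu\in\CM(S)$ and every $f\in\BL(S)$ one has $|\pair{\mu-\nu}{f}|\leq\|\mu-\nu\|_\BL^*\cdot\|f\|_\BL$, so the restriction of the $\|\cdot\|_\BL^*$-norm topology is finer than the restriction of the $\sigma(\CM(S),\BL(S))$-weak topology on $M$. It remains to establish the reverse.

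For the reverse inclusion I would exploit the first countability hypothesis. Since the weak topology on $M$ is first countable, a map out of $M$ (with the weak topology) to any topological space is continuous if and only if it is sequentially continuous; equivalently, to show the identity $(M,\tau_{\mathrm{weak}})\to(M,\tau_{\BL^*})$ is continuous it suffices to verify that every $\tau_{\mathrm{weak}}$-convergent sequence in $M$ is $\tau_{\BL^*}$-convergent to the same limit.

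So let $(\mu_n)\subset M$ and $\mu\in M$ satisfy $\pair{\mu_n}{f}\to\pair{\mu}{f}$ for every $f\in\BL(S)$. Then $\sup_n\|\mu_n\|_\TV\leq m<\infty$ and for each $f\in\BL(S)$ the sequence $\pair{\mu_n}{f}$ converges. Theorem \ref{thrm:weak-implies-strong convergence} applies and delivers some $\tilde\mu\in\CM(S)$ with $\|\mu_n-\tilde\mu\|_\BL^*\to 0$. This norm convergence forces $\pair{\mu_n}{f}\to\pair{\tilde\mu}{f}$ for every $f\in\BL(S)$, hence $\pair{\tilde\mu-\mu}{f}=0$ for every $f\in\BL(S)$. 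Because $\|\cdot\|_\BL^*$ is a norm on $\CM(S)$, $\BL(S)$ separates elements of $\CM(S)$, so $\tilde\mu=\mu$ and $\|\mu_n-\mu\|_\BL^*\to 0$, as required.

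The only substantial ingredient is Theorem \ref{thrm:weak-implies-strong convergence} itself; once granted, the proof is short. The one point that deserves care is the role of first countability: without it, sequential equivalence of the two topologies would not suffice to conclude their equality, since $\tau_{\mathrm{weak}}$-closed and $\tau_{\mathrm{weak}}$-sequentially closed sets need not coincide in general. The hypothesis is tailor-made to bypass this, and the uniform total variation bound is exactly what is needed to feed Theorem \ref{thrm:weak-implies-strong convergence} on the difference sequence $\mu_n-\mu$ (which satisfies $\|\mu_n-\mu\|_\TV\leq 2m$).
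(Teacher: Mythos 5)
Your proof is correct and follows essentially the same route as the paper: first countability reduces the question to sequences, Theorem \ref{thrm:weak-implies-strong convergence} upgrades weak convergence of a $\TV$-bounded sequence to $\|\cdot\|_\BL^*$-convergence, and the separating duality with $\BL(S)$ identifies the limit. The paper phrases the reduction via sequential closedness of norm-closed sets (citing Kelley) rather than sequential continuity of the identity map, but this is only a cosmetic difference.
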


\begin{proof}
We have to show that for any $\|\cdot\|_\BL^*$-norm closed set $C$, $C\cap M$ is closed in the restriction of the $\sigma(\CM(S),\BL(S))$-weak topology to $M$. Since the latter is first countable, $C\cap M$ is relatively $\sigma(\CM(S),\BL(S))$-weak closed if and only if for every $\sigma(\CM(S),\BL(S))$-weakly converging {\it sequence} $\mu_n\to\mu$ in $\CM(S)$ with $\mu_n\in C$, one has $\mu\in C$ (cf. \cite{Kelley:1955} Theorem 2.8, p. 72). Let $(\mu_n)$ be such a sequence. Because $\sup_{\mu\in M}\|\mu\|_\TV<\infty$ by assumption, Theorem \ref{thrm:weak-implies-strong convergence} implies that there exists $\mu'\in\CM(S)$ such that $\|\mu_n-\mu'\|_\BL^*\to 0$. Since $C$ is relatively $\|\cdot\|_\BL^*$-norm closed in $M$, $\mu'\in C$. Moreover, $\pair{\mu}{f}=\pair{\mu'}{f}$ for every $f\in\BL(S)$, so $\mu=\mu'\in C$.
\end{proof}

The following technical result provides a tractable condition that ensures first countability of the relative weak topology on the set $M$, as we shall show after having proven the result. We need to introduce some notation. For $\lambda>0$ and $C\subset S$ closed and nonempty, define
\[
h_{\lambda,C}(x) := \bigl[ 1- \mbox{$\frac{1}{\lambda}$} d(x,C)\bigr]^+.
\]
Then $h_{\lambda, C}\in \BL(S)$, $|h_{\lambda, C}|_L=\frac{1}{\lambda}$, $0\leq h_{\lambda, C}\leq 1$ and $h_{\lambda, C}\downarrow \ind_C$ pointwise as $\lambda\downarrow 0$. Moreover $h_{\lambda, C}=0$ on $S\setminus C^\lambda$.
We can now state the result.

\begin{lemma}\label{lem:tractable cond}
Let $M\subset \CM(S)$ be such that $m:=\sup_{\mu\in M}\|\mu\|_\TV<\infty$. If for every $\mu\in M$ and every $\eps>0$ there exist $K_1,\dots,K_n\subset S$ compact such that for $K=\bigcup_{i=1}^n K_i$:
\begin{enumerate}
\item $|\mu|(S\setminus K)<\eps$,
\item There exists $0<\lambda_0\leq \eps$ such that for all $0<\lambda\leq\lambda_0$ there exists $\delta_1,\dots, \delta_n>0$ such that the following statement holds:
\begin{align*}
&\mbox{If}\ \nu\in M\ \mbox{satisfies}\ |\pair{\mu-\nu}{h_{\lambda,K_i}}|<\delta_i\ \mbox{for all $i=1,\dots,n$},\\
&\mbox{then}\ |\nu|(S\setminus K^\lambda)<\eps.
\end{align*}
\end{enumerate}
Then the relative $\sigma(\CM(S),\BL(S))$-weak topology on $M$ is first countable.
\end{lemma}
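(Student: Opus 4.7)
The plan is to exhibit, for each $\mu \in M$, an explicit countable neighbourhood base in the relative $\sigma(\CM(S),\BL(S))$-weak topology, combining ``tail-control'' neighbourhoods that come directly from the hypothesis via the functions $h_{\lambda,K_i}$ with ``bulk-control'' neighbourhoods drawn from a single countable family of $\BL(S)$-functions that uniformly approximate any test function on each compact core.

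For every $k \in \NN$, apply the hypothesis to $\mu$ with $\eps = 1/k$ to obtain compact sets $K_1^{(k)},\dots,K_{n_k}^{(k)}$ with union $K^{(k)}$ and $|\mu|(S\setminus K^{(k)}) < 1/k$; fix some $\lambda^{(k)} \in (0,\min(\lambda_0^{(k)},1/k)]$ and the corresponding $\delta_i^{(k)} > 0$ supplied by~(ii). The $\BL(S)$-weak neighbourhood
$$W^{(k)} := \bigl\{\nu \in M : |\pair{\mu-\nu}{h_{\lambda^{(k)},K_i^{(k)}}}| < \delta_i^{(k)},\ i=1,\dots,n_k\bigr\}$$
then guarantees, by~(ii), that $|\nu|(S\setminus L^{(k)}) < 1/k$ for every $\nu \in W^{(k)}$, where $L^{(k)} := (K^{(k)})^{\lambda^{(k)}}$; by~(i), $|\mu|(S\setminus L^{(k)}) < 1/k$ as well. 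On each compact $K^{(k)}$, the unit ball of $\BL(K^{(k)})$ is $\|\cdot\|_\infty$-compact by Arzel\`a--Ascoli, hence separable; pick a countable $\|\cdot\|_\infty$-dense subset $\{g_j^{(k)}\}_{j\in\NN}$ and extend each via McShane, followed by clipping to $[-1,1]$, to obtain $\tilde g_j^{(k)} \in \BL(S)$ that is $1$-Lipschitz and bounded in modulus by $1$. The proposed countable base at $\mu$ is then
$$N(k,J,l) := W^{(k)} \cap \bigl\{\nu \in M : |\pair{\mu-\nu}{\tilde g_j^{(k)}}| < 1/l,\ j \in J\bigr\},\qquad k,l \in \NN,\ J \subset \NN \text{ finite}.$$

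To verify it is indeed a neighbourhood base, take any basic weak neighbourhood $V(\mu;f_1,\dots,f_N;\delta)$ of $\mu$ in $M$; after rescaling, assume $\|f_i\|_\BL \leq 1$. Let $m := \sup_M\|\cdot\|_\TV$. Choose $k$ so large and $\lambda^{(k)}$ so small that $4/k < \delta/3$ and $4m\lambda^{(k)} < \delta/6$, and then $j_i \in \NN$ with $\eta := \|f_i - g_{j_i}^{(k)}\|_{\infty,K^{(k)}} < \delta/(12m)$. The key estimate: for $x \in L^{(k)}$, compactness of $K^{(k)}$ yields $y \in K^{(k)}$ with $d(x,y)\leq\lambda^{(k)}$; by the triangle inequality and $1$-Lipschitzness of both $f_i$ and $\tilde g_{j_i}^{(k)}$,
$$|f_i(x) - \tilde g_{j_i}^{(k)}(x)| \leq \eta + 2\lambda^{(k)}.$$
Splitting the integral over $L^{(k)}$ and its complement, and using $(|\mu|+|\nu|)(L^{(k)})\leq 2m$ together with the tail bounds, gives
$$|\pair{\mu-\nu}{f_i - \tilde g_{j_i}^{(k)}}| \leq 2m(\eta + 2\lambda^{(k)}) + 4/k < 2\delta/3,$$
so combining with $|\pair{\mu-\nu}{\tilde g_{j_i}^{(k)}}| < 1/l < \delta/3$ for any $l > 3/\delta$ yields $N(k,\{j_1,\dots,j_N\},l) \subset V$.

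The main obstacle is that $L^{(k)}$ need not be compact or even totally bounded---on a Polish space such as an infinite discrete metric space, the closed $1$-neighbourhood of a single point equals the whole space---so separability of $C(L^{(k)})$ or $\BL(L^{(k)})$ in any useful norm is unavailable. The resolution is to exploit the Lipschitz property of the test function $f$ itself: uniform approximation on the compact core $K^{(k)}$ propagates automatically, with additive error $O(\lambda^{(k)})$, to the neighbourhood $L^{(k)}$, so that a single countable family of McShane extensions $\{\tilde g_j^{(k)}\}$ anchored on the countable collection $\{K^{(k)}\}_k$ suffices.
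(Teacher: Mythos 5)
Your proof is correct, and its analytic core --- approximate the test function uniformly on a compact core by a member of a countable family, propagate that approximation to the $\lambda$-neighbourhood of the core using the Lipschitz constants, and control the tails of both $\mu$ and $\nu$ outside $K^\lambda$ via hypothesis ({\it ii}\,) --- is the same as in the paper. Where you genuinely differ is in how countability is obtained. The paper builds a single countable family $\CF$ of test functions, independent of $\mu$: hat functions $f^\lambda_{F,a}$ anchored at finite subsets $F$ of a countable dense set $D\subset S$ with rational parameters, dense in the unit ball of $\BL(S)$ for the compact-open topology. Because the tail-control functions $h_{\lambda,K_i}$ must then also be drawn from $\CF$, the paper has to approximate each compact $K_i$ by a finite $F_i\subset D$ in Hausdorff distance and estimate $\|h_{\lambda,K_i}-h_{\lambda,F_i}\|_\infty$ via the Birkhoff inequalities. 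You instead exploit that first countability is a local property: for a fixed $\mu$ only countably many compact cores $K^{(k)}$ arise, so the neighbourhoods $W^{(k)}$ built directly from the functions $h_{\lambda^{(k)},K_i^{(k)}}$ already form a countable collection and need no approximation, and on each $K^{(k)}$ you obtain a countable dense set of approximants from Arzel\`a--Ascoli compactness of the unit ball of $\BL(K^{(k)})$ plus McShane extension, rather than from an explicit construction over $D$. This buys a shorter argument that avoids the Hausdorff-distance step entirely; the paper's construction buys a single countable test family valid for all $\mu\in M$ at once, which is more than first countability requires. One cosmetic remark: $\lambda^{(k)}$ is fixed when the base is constructed, so you cannot afterwards ``choose $\lambda^{(k)}$ small''; but since you arranged $\lambda^{(k)}\leq 1/k$, taking $k$ large already forces $4m\lambda^{(k)}<\delta/6$, so nothing is lost.
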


\begin{proof}
We first define a countable family $\CF$ of functions in $\bar{B}:=\{g\in\BL(S):\|g\|_\infty\leq 1\}$ that is dense in $\bar{B}$ for the compact-open topology, i.e. the topology of uniform convergence on compact subsets of $S$. Let $D$ be a countable dense subset of $S$. The family of {\it finite} subsets of $D$ is countable. Let $I_\QQ:=\QQ\cap[0,1]$. For a finite subset $F\subset D$, $\lambda\in I_\QQ\setminus \{0\}$ and function $a:F\to I_\QQ$ define
\[
f_{F,a}^\lambda(x) := \bigvee_{y\in F} \bigl[ a(y) (1 - \mbox{$\frac{1}{\lambda}$}d(x,y))^+\bigr].
\]
Here $\vee$ denotes the maximum, as before. Then $f_{F,a}^\lambda\in\BL(S)$, $|f_{F,a}^\lambda|_L\leq \max_{y\in F}\frac{a(y)}{\lambda} \leq \frac{1}{\lambda}$. Moreover, $f^\lambda_{F,a}$ vanishes outside $F^\lambda= \bigcup_{y\in F} B(y,\lambda)$.
For a finite subset $F\subset D$ the family $\CF_F$ of all such functions $f^\lambda_{F,a}$ with $a$ and $\lambda$ as indicated is countable. So the union $\CF^+$ of all sets $\CF_F$ over all finite $F\subset D$ is countable too. It is quickly verified that on any compact subset $K$ of $S$ any positive $h\in\bar{B}$ can be uniformly approximated by $f\in\CF^+$. Consequently, $\CF = \CF^+-\CF^+\subset \BL(S)$ is countable and any $h\in \bar{B}$ can be approximated uniformly on compact sets by means of $f\in \CF$.

Now let $\mu\in M$ and consider the open neighbourhood 
\[
U_\mu(h,r) := \bigl\{\nu\in M: |\pair{\mu-\nu}{h}|<r\bigr\},
\]
with $r>0$ and $h\in\BL(S)$. Without loss of generality we can assume that $\|h\|_\BL=1$. We shall prove that there exist $f_0, \dots, f_n\in \CF$ and $q_0, \dots, q_n>0$ in $\QQ$ such that
\begin{equation}\label{eq:subbase}
\bigcap_{i=0}^n \;\bigl\{\nu\in M: \bigl|\pair{\mu-\nu}{h_i}\bigr|<q_i\bigr\} \subset U_\mu(h,r).
\end{equation}
Then the relative weak topology on $M$ is first countable.

Let $\eps\in\QQ$ such that $0<\eps\leq\frac{1}{6}r$ and let $K_i, K\subset S$ be compact and $0<\lambda_0\leq\eps$ as in the conditions of the lemma. There exists $f_0\in\CF$ such that $\sup_{x\in K}|h(x)-f_0(x)|\leq \frac{1}{4m}\eps$. Then for any $0<\lambda\leq\lambda_0$, $x\in K^\lambda$ and $x_0\in K$,
\begin{align*}
|h(x)-f_0(x)| &\leq |h(x)-h(x_0)|\ +\ |h(x_0)-f_0(x_0)|\ + \ |f_0(x_0)-f_0(x)| \\
&\leq (1+|f_0|_L)d(x,x_0) + \mbox{$\frac{1}{4m}$}\eps.
\end{align*}
Hence
\[
\sup_{x\in K^\lambda} |h(x)-f_0(x)| \leq (1+|f_0|_L)\lambda + \mbox{$\frac{1}{4m}$}\eps.
\]
Let $0<\lambda'_0\leq \lambda_0$ be such that $(1+|f_0|_L)\lambda'_0\leq\frac{1}{4m}\eps$. Now one has, using property ({\it i}\,),
\begin{align}
|\pair{\mu-\nu}{h}| & \leq |\pair{\mu-\nu}{h-f_0}| + |\pair{\mu-\nu}{f_0}|\nonumber\\
& \leq \int_{K^\lambda} |h-f_0|\,d|\mu-\nu|\ +\ 2|\mu|(S\setminus K^\lambda) + 2|\nu|(S\setminus K^\lambda) + |\pair{\mu-\nu}{f_0}|\nonumber\\
& \leq \mbox{$\frac{1}{2m}$}\eps\cdot 2m + 2\eps + 2|\nu|(S\setminus K^\lambda) + |\pair{\mu-\nu}{f_0}| \label{eq:estimate pairing h}
\end{align}
for all $0<\lambda\leq \lambda'_0$. Fix $\lambda\in\QQ$ with $0<\lambda\leq \lambda'_0$ and let $\delta_1,\dots,\delta_n$ be as in property ({\it ii}\,). 

The Hausdorff semidistance on closed and bounded subsets of $S$ is given by
\[
\delta(C,C') := \sup_{x\in C} d(x,C').
\]
The Hausdorff distance is defined by 
\[
d_H(C,C') := \max\bigl(\delta(C,C'),\delta(C',C)\bigr).
\]
The collection of finite subsets of $D$ form a separable dense subset of the set of compact subsets of $S$, $\CK(S)$, for $d_H$. If $F\subset D$ is finite and $K'\in \CK(S)$, then by the Birkhoff Inequalities
\begin{align*}
|h_{\lambda,K'} - h_{\lambda, F}| & = \bigl| \bigl[1-\mbox{$\frac{1}{\lambda}$}d(x,K')\bigr]^+ - \bigl[1-\mbox{$\frac{1}{\lambda}$}d(x,F)\bigr]^+|\\
& \leq \bigl| \bigl[1-\mbox{$\frac{1}{\lambda}$}d(x,K')\bigr] - \bigl[1-\mbox{$\frac{1}{\lambda}$}d(x,F)\bigr] \bigr|\\
& = \mbox{$\frac{1}{\lambda}$} |d(x,K')- d(x,F)| \leq \mbox{$\frac{1}{\lambda}$} \cdot d_H(K',F).
\end{align*}
Let $F_i\subset D$ be finite such that $d_H(K_i,F_i)\leq \frac{1}{4m}\lambda\delta_i$. Then $h_{\lambda,F_i}=f^\lambda_{F_i,\ind}\in\CF$. Put $f_i:=h_{\lambda,F_i}$. Let $q_i\in\QQ$ be such that $0<q_i<\frac{1}{2}\delta_i$. If $\nu\in M$ is such that $|\pair{\mu-\nu}{f_i}|<q_i$ for $i=1,\dots,n$, then
\[
|\pair{\mu-\nu}{h_{\lambda, K_i}}| \leq \|h_{\lambda,K_i}-h_{\lambda,F_i}\|_\infty\cdot \|\mu-\nu\|_\TV + |\pair{\mu-\nu}{f_i}| < \mbox{$\frac{1}{2}$}\delta_i + \mbox{$\frac{1}{2}$}\delta_i = \delta_i
\]
According to condition ({\it ii}\,) one has $|\nu|(S\setminus K^\lambda)<\eps$. Put $q_0=\eps$. Inequality \eqref{eq:estimate pairing h} then yields
\eqref{eq:subbase}, as desired.
\end{proof}

Because conditions ({\it i}\,) and ({\it ii}\,) in Lemma \ref{lem:tractable cond} are immediately satisfied when $M$ is uniformly tight, we obtain

\begin{corollary}\label{clry:coincidence tight}
Let $(S,d)$ be a complete separable metric space and let $M\subset\CM(S)$ such that $\sup_{\mu\in M} \|\mu\|_\TV<\infty$ and $M$ is uniformly tight. Then the $\sigma(\CM(S),\BL(S))$-weak topology coincides with the $\|\cdot\|_\BL^*$-norm topology on $M$.
\end{corollary}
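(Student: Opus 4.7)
The plan is to apply Lemma \ref{lem:tractable cond} and then Theorem \ref{thrm:coincidence topologies}. The bulk of the work has been absorbed into those two results, so the task reduces to checking that uniform tightness of $M$ makes conditions (i) and (ii) of Lemma \ref{lem:tractable cond} hold trivially.

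First, I would fix $\mu \in M$ and $\eps > 0$, and invoke uniform tightness to produce a single compact set $K \subset S$ such that $|\nu|(S \setminus K) < \eps$ for every $\nu \in M$. Taking $n = 1$ and $K_1 := K$, condition (i) follows at once, since $\mu \in M$ gives $|\mu|(S \setminus K) < \eps$.

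For condition (ii), I would pick any $\lambda_0 \in (0, \eps]$ and, for each $0 < \lambda \leq \lambda_0$, any $\delta_1 > 0$ (say $\delta_1 = 1$). Since $K \subset K^\lambda$, uniform tightness yields $|\nu|(S \setminus K^\lambda) \leq |\nu|(S \setminus K) < \eps$ for every $\nu \in M$, so the required implication in (ii) holds with its conclusion true unconditionally on the hypothesis about $|\pair{\mu - \nu}{h_{\lambda, K_1}}|$. Lemma \ref{lem:tractable cond} then gives first countability of the relative $\sigma(\CM(S), \BL(S))$-weak topology on $M$, and Theorem \ref{thrm:coincidence topologies} delivers the desired coincidence with the $\|\cdot\|_\BL^*$-norm topology on $M$.

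There is no genuine obstacle here: the corollary is essentially a one-line application of the preceding machinery, with the observation that uniform tightness of $M$ subsumes both approximation conditions of Lemma \ref{lem:tractable cond}. All the analytic difficulty is concentrated upstream in the Schur-like property (Theorem \ref{thrm:weak-implies-strong convergence}), which powers Theorem \ref{thrm:coincidence topologies}, and in the countable approximation argument in Lemma \ref{lem:tractable cond}.
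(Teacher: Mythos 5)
Your proof is correct and follows exactly the paper's route: the paper also disposes of this corollary in one line by observing that uniform tightness of $M$ makes conditions ({\it i}\,) and ({\it ii}\,) of Lemma \ref{lem:tractable cond} immediate (with the conclusion of ({\it ii}\,) holding unconditionally), and then invokes Theorem \ref{thrm:coincidence topologies}. No gaps.
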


\begin{remark} 
Gwiazda {\it et al.} \cite{Gwiazda_ea:2010} state at p. 2708 that the topology of narrow convergence in $\CM(S)$, i.e. that of convergence of sequences of signed measures paired with $f\in C_b(S)$, is metrizable on tight subsets that are uniformly bounded in total variation norm. In fact it can be metrized by the norm $\|\cdot\|_\BL^*$.
\end{remark}

A second case, more involved, in which the conditions of Lemma \ref{lem:tractable cond} are satisfied, is: 

\begin{proposition}\label{clry:coincidence spheres}
Let $(S,d)$ be a complete separable metric space and let 
\[
M:=\{\mu\in \CM(S): \|\mu\|_\TV=\rho\},\quad (\rho>0).
\]
Then condition ({\it i}) and ({\it ii}) of Lemma \ref{lem:tractable cond} hold. In particular, the relative $\sigma(\CM(S),\BL(S))$-weak topology  and relative $\|\cdot\|_\BL^*$-norm topology on $M$ coincide. 
\end{proposition}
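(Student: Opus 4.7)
My plan is to verify conditions \textit{(i)} and \textit{(ii)} of Lemma \ref{lem:tractable cond} by using the Hahn--Jordan decomposition of $\mu$ together with the sphere condition $\|\nu\|_\TV = \rho$, which is what allows a lower bound on the mass of $\nu$ near a compact set to translate into an upper bound on the mass far away.

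Fix $\mu\in M$ and $\eps\in(0,\rho)$. Taking a Hahn decomposition $S = S^+\,\dot\cup\, S^-$ for $\mu$, the measures $\mu^+$ and $\mu^-$ are supported on the disjoint Borel sets $S^+$ and $S^-$. Since $S$ is Polish, every finite Borel measure is inner regular on compacta (Ulam), so I can choose compact $K_1\subset S^+$ and $K_2\subset S^-$ with
\[
\mu^+(S\setminus K_1)<\tfrac{\eps}{12},\qquad \mu^-(S\setminus K_2)<\tfrac{\eps}{12}.
\]
(If $\mu^\pm=0$ I simply omit the corresponding piece.) The compact sets $K_1,K_2$ are disjoint, so $d_0:=d(K_1,K_2)>0$. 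Setting $K:=K_1\cup K_2$, condition \textit{(i)} holds with room to spare: $|\mu|(S\setminus K)<\eps/6<\eps$. I then set $\lambda_0:=\min(\eps,d_0/3)$ and, for any $0<\lambda\leq\lambda_0$, observe that $K_1^\lambda$ and $K_2^\lambda$ are disjoint. Put $h_i:=h_{\lambda,K_i}$; these functions satisfy $0\leq h_i\leq 1$, $h_i\equiv 1$ on $K_i$, and $\operatorname{supp} h_i\subset K_i^\lambda$.

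Using $h_1\geq\ind_{K_1}$, $h_2\geq\ind_{K_2}$, and the containments $K_1^\lambda\subset S\setminus K_2$, $K_2^\lambda\subset S\setminus K_1$, I would estimate
\[
\pair{\mu}{h_1-h_2}\geq \mu^+(K_1)-\mu^-(K_1^\lambda)-\mu^+(K_2^\lambda)+\mu^-(K_2)\geq \rho-\tfrac{4\eps}{12}=\rho-\tfrac{\eps}{3}.
\]
Now I set $\delta_1=\delta_2:=\eps/3$. If $\nu\in M$ satisfies $|\pair{\mu-\nu}{h_i}|<\delta_i$ for $i=1,2$, then by the triangle inequality $\pair{\nu}{h_1-h_2}>\rho-\eps$. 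On the other hand, since $h_1$ and $h_2$ have disjoint supports contained in $K_1^\lambda\cup K_2^\lambda=K^\lambda$, splitting $\nu=\nu^+-\nu^-$ gives
\[
\pair{\nu}{h_1-h_2}\leq \nu^+(K_1^\lambda)+\nu^-(K_2^\lambda)\leq |\nu|(K^\lambda).
\]
Combining the two bounds I obtain $|\nu|(K^\lambda)>\rho-\eps$, and here the sphere condition $\|\nu\|_\TV=\rho$ enters decisively: it yields $|\nu|(S\setminus K^\lambda)=\rho-|\nu|(K^\lambda)<\eps$, which is condition \textit{(ii)}. The final assertion about coincidence of the topologies then follows by applying Lemma \ref{lem:tractable cond} and Theorem \ref{thrm:coincidence topologies}.

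I do not anticipate a genuine obstacle, as the argument is structurally short; the only point that needs care is the choice to split $K$ into \emph{two} pieces $K_1,K_2$ separated into the Hahn sets, rather than using a single compact set: this separation is what makes $h_1-h_2$ behave almost like a dual witness to the total variation norm near $\mu$. Without it, one could not recover $\rho$ in the lower bound on $\pair{\mu}{h_1-h_2}$, and without the equality $\|\nu\|_\TV=\rho$ at the end, the lower bound $|\nu|(K^\lambda)>\rho-\eps$ would be useless for bounding the mass outside $K^\lambda$.
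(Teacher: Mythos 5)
Your proof is correct and follows essentially the same route as the paper's: Jordan/Hahn decomposition, Ulam's lemma to produce disjoint compacts $K_1\subset S^+$, $K_2\subset S^-$ with separated $\lambda$-neighbourhoods, the test functions $h_{\lambda,K_i}$ to transfer mass estimates from $\mu$ to $\nu$, and the sphere condition $\|\nu\|_\TV=\rho$ to convert the lower bound on $|\nu|(K^\lambda)$ into condition (\textit{ii}). The only (cosmetic) difference is that you package the two estimates into the single test function $h_1-h_2$ and bound $|\nu|(K^\lambda)$ directly, whereas the paper bounds $\nu^+(K_+^\lambda)$ and $\nu^-(K_-^\lambda)$ separately before summing.
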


\begin{proof}
Take $\eps>0$, $\mu\in M$ and let $\mu^+$ and $\mu^-$ be the positive and negative part of $\mu$, i.e. $\mu=\mu^+-\mu^-$. Since $\mu^\pm$ are disjoint and tight, by Ulam's Lemma, there exist compact sets $K_\pm\subset S$ such that $K_+\cap K_-=\emptyset$, $\mu^\pm(K_\mp)=0$ and
\begin{equation}\label{eq:inital tightness estimate}
\mu^+(S)-\mu^+(K_+)<\eps/8\quad \mbox{and}\quad \mu^-(S)-\mu^-(K_-)<\eps/8.
\end{equation}
In particular,
\[
|\mu|(S\setminus (K_+\cup K_-)) \leq \mu^+(S\setminus K_+) +\mu^-(S\setminus K_-) < \mbox{$\frac{1}{8}$}\eps + \mbox{$\frac{1}{8}$}\eps <\eps,
\]
so condition ({\it i}\,) of Lemma \ref{lem:tractable cond} is satisfied for $K=K_+\cup K_-$.

Because $K_+$ and $K_-$ are compact, there exists $\lambda_0>0$ such that $K_+^{\lambda_0}\cap K_-^{\lambda_0}=\emptyset$. Then $K_+^\lambda\cap K_-^\lambda=\emptyset$ for all $0<\lambda\leq\lambda_0$.
Without loss of generality we can assume that $\lambda_0\leq\eps$. Fix $0<\lambda\leq \lambda_0$.

Let us assume for the moment that $\delta_\pm>0$ have been selected. At the end we will then see how to choose these, such that condition ({\it ii}\,) will be satisfied.
If $\nu\in M$ satisfies 
\begin{equation}\label{condition nu}
|\pair{\mu-\nu}{h_{\lambda,K_+}}|<\delta_+\quad \mbox{and}\quad |\pair{\mu-\nu}{h_{\lambda,K_-}}|<\delta_-,
\end{equation}
then
\[
\pair{\mu-\nu^+}{h_{\lambda,K_+}} \leq \pair{\mu-\nu^++\nu^-}{h_{\lambda,K_+}}\leq |\pair{\mu-\nu}{h_{\lambda,K_+}}|<\delta_+.
\]
Consequently, since $\ind_{K_+}\leq h_{\lambda,K_+} \leq\ind_{K^\lambda_+}$,
\[
\mu^+(K_+) - \mu^-(K^\lambda_+)-\nu^+(K^\lambda_+) \leq \pair{\mu-\nu^+}{h_{\lambda,K_+}}<\delta_+.
\]
We obtain
\begin{align*}
\nu^+(K^\lambda_+) &> \mu^+(K_+) - \mu^-(K^\lambda_+) - \delta_+
  \ \geq\ \mu^+(K_+) - \mu^-(S\setminus K_-) -\delta_+\\
	& > \mu^+(K_+) - \mbox{$\frac{1}{8}$}\eps - \delta_+.
\end{align*}
In a similar way,
\[
\pair{-\mu-\nu^-}{h_{\lambda,K_-}}\leq\pair{\nu-\mu}{h_{\lambda,K_-}}<\delta_-,
\]
whence
\[
\nu^-(K^\lambda_-) > \mu^-(K_-) - \mbox{$\frac{1}{8}$}\eps - \delta_-.
\]
Therefore, using \eqref{eq:inital tightness estimate},
\begin{align*}
\nu^+(K^\lambda_+)+\nu^-(K^\lambda_-) &> \mu^+(K_+) + \mu^-(K_-) - \mbox{$\frac{1}{4}$}\eps - (\delta_++\delta_-)\\
&> \mu^+(S) + \mu^-(S) - \mbox{$\frac{1}{2}$}\eps -(\delta_++\delta_-) = \rho - (\delta_++\delta_-+ \mbox{$\frac{1}{2}$}\eps).
\end{align*}
Note that in this last step the assumption that $M$ is a total variation sphere is used in an essential manner. The last inequality implies that
\[
|\nu|(S\setminus K^\lambda) = |\nu|(S) - |\nu|(K^\lambda_+) - |\nu|(K^\lambda_-) \leq \rho - \nu^+(K^\lambda_+) - \nu^-(K^\lambda_-) < \delta_++\delta_-+\mbox{$\frac{1}{2}$}\eps.
\]
Thus, if we take $K_1=K_+$, $K_2=K_-$, $\delta_+=\delta_-=\delta_i=\frac{1}{4}\eps$, we see that condition ({\it ii}\,) in Lemma \ref{lem:tractable cond} is satisfied. Theorem \ref{thrm:coincidence topologies} then yields the final statement.
\end{proof}

\begin{remark} 1.) In \cite{Pachl:2013}, Theorem 5.38 and Corollary 5.39 come close to Theorem \ref{thrm:coincidence topologies}. A technical condition seems to prevent deriving our new result on coincidence of topologies from the results in \cite{Pachl:2013}.\\
2.) The result stated in Proposition \ref{clry:coincidence spheres} can be found in \cite{Pachl:2013}, Corollary 5.39. There, a proof of this result is provided using completely different techniques. Concerning coincidence of these topologies on total variation spheres, see some further notes in \cite{Pachl:2013}, indicating e.g. \cite{Granirer-Leinert:1981}.
\end{remark}

In view of Corollary \ref{clry:coincidence tight} and Proposition \ref{clry:coincidence spheres} one might be tempted to conjecture that the weak and norm topologies would coincide on sets of measures with uniformly bounded total variation. This does not hold however, as the following counterexample illustrates.

\begin{counterexample}
Let $(S,d)$ be the natural numbers $\NN$ equipped with the restriction of the Euclidean metric on $\RR$. Now, $\BL(\NN)$ is linearly isomorphic to $\ell^\infty$: the map $f\mapsto (f(n))_{n\in\NN}$ is bijective and continuous. Hence it is a linear isomorphism by Banach's Isomorphism Theorem. Observe that $|f|_L\leq 2\|f\|_\infty$. Since $(\NN,d)$ is uniformly discrete, the norms $\|\cdot\|_\BL^*$ and $\|\cdot\|_\TV$ on $\CM(\NN)$ are equivalent (cf. \cite{Hille-Worm:2009}, proof of Theorem 3.11). So $\CM(\NN)_\BL$ is linearly isomorphic to $\ell^1$ under the map $\mu\mapsto (\mu(\{n\}))_{n\in\NN}$. One has $\|\mu\|_\TV=\|(\mu)\|_{\ell^1}$. Moreover, the duality between $\CM(\NN)$ and $\BL(\NN)$ is precisely the duality between $\ell^1$ and $\ell^\infty$ under the given isomorphisms.
Consider now $M:=\{(\mu)\in\ell^1: \|(\mu)\|_{\ell^1}\leq 1\}$. It represents a set of measures that is uniformly bounded in total variation norm. Let $S:=\{(\mu)\in\ell^1: \|(\mu)\|_{\ell^1}= 1\}$. Then $S$ is a $\|\cdot\|_\TV$-closed subset of $M$. The weak closure of $S$ equals $M$ however (cf. \cite{Conway:1985}, Section V.1, Ex. 10). Therefore, the $\|\cdot\|_\BL^*$ (i.e. $\|\cdot\|_\TV$) and weak topologies cannot coincide on $M$.
\end{counterexample}

\section{Proof of the Schur-like property}
\label{sec:weak-implies-strong}

We provide a self-contained proof of the Schur-like property for spaces of measures, Theorem \ref{thrm:weak-implies-strong convergence}, using a `set-geometric' argument.  See Remark \ref{rem:alternative approaches} below for alternative approaches. 

We first introduce various technical lemmas that enable our set-geometric argument. Then we start with a complete proof of the particular case of positive measures, Theorem \ref{thrm:weak-implies-strong convergence}, as it will aid the reader in getting introduced to the type of argument employed, based on Lemma \ref{lem:non-tight sequence prop}, and the complications that arise when proving the result for general signed measures in the section that follows.

\subsection{Technical lemmas}

The following lemmas are needed in the proof of the fundamental result.

\begin{lemma}\label{lem:sup op BL functions}
Let $A\subset\BL(S)$ be such that $\sup_{f\in A}\|f\|_\BL<\infty$. Then $\sup(A)$ exists in $\BL(S)$ and $|\sup(A)|_L\leq\sup_{f\in A}|f|_L$. In particular, $\|\sup(A)\|_\BL\leq 2\sup_{f\in A}\|f\|_\BL$.
\end{lemma}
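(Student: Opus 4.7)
The plan is to build $\sup(A)$ pointwise and then verify it lies in $\BL(S)$ with the claimed Lipschitz bound. Set $M := \sup_{f\in A}\|f\|_\BL < \infty$, which in particular gives $\sup_{f\in A}\|f\|_\infty \leq M$ and $L := \sup_{f\in A}|f|_L \leq M$. Define
\[
g(x) := \sup_{f \in A} f(x), \qquad x \in S.
\]
Since every $f\in A$ satisfies $-M \leq f(x) \leq M$, the supremum is attained by a finite real number at each $x$, and $\|g\|_\infty \leq \sup_{f\in A}\|f\|_\infty$.

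Next I would show $|g|_L \leq L$. For any $x,y \in S$ and any $f\in A$,
\[
f(x) \leq f(y) + |f|_L\, d(x,y) \leq g(y) + L\, d(x,y).
\]
Taking the supremum over $f \in A$ on the left gives $g(x) \leq g(y) + L\, d(x,y)$, and the symmetric argument yields $|g(x)-g(y)| \leq L\, d(x,y)$. Hence $g \in \BL(S)$ with $|g|_L \leq L$.

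To see that $g$ really is the supremum of $A$ in $\BL(S)$ (partially ordered pointwise, as noted in the Preliminaries), observe that $g \geq f$ pointwise for all $f\in A$ by construction, and if $h \in \BL(S)$ satisfies $h \geq f$ pointwise for every $f\in A$, then $h(x) \geq \sup_{f\in A} f(x) = g(x)$ for all $x$, so $h \geq g$. Thus $g = \sup(A)$ in $\BL(S)$.

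Finally, combining the two bounds,
\[
\|\sup(A)\|_\BL = \|g\|_\infty + |g|_L \leq \sup_{f\in A}\|f\|_\infty + \sup_{f\in A}|f|_L \leq 2 \sup_{f\in A} \|f\|_\BL.
\]
There is no real obstacle here; the only minor care is to remember that $\|\cdot\|_\BL = \|\cdot\|_\infty + |\cdot|_L$ and to use the uniform bound on $\|f\|_\infty$ so that the pointwise supremum stays finite and bounded.
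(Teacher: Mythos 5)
Your proposal is correct and follows essentially the same route as the paper: define $\sup(A)$ pointwise and bound its Lipschitz constant by $\sup_{f\in A}|f|_L$, the only cosmetic difference being that you take the supremum of the inequality $f(x)\leq g(y)+L\,d(x,y)$ directly, whereas the paper picks an $\eps$-almost-maximizing $f$ at $x$. Your extra verification that the pointwise supremum is indeed the least upper bound in the pointwise order is a harmless addition the paper leaves implicit.
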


\begin{proof}
Put $L:=\sup_{f\in A}|f|_L$ and let $g=\sup(A)$, i.e. $g(x) := \sup\{f(x): f\in A\}$ for every $x\in S$. 
Let $x,y\in S$. We may assume $g(x)\geq g(y)$. Let $\eps>0$. There exists $f\in A$ such that $g(x)<f(x)+\eps$. By definition $g(y)\geq f(y)$. Hence
\[
|g(x)-g(y)| \leq g(x) - f(x)\ +\ f(x) - f(y) < \eps+|f(x)-f(y)| \leq\eps+L\,d(x,y).
\]
Since $\eps$ is arbitrary, we obtain that $|g(x)-g(y)|\leq L d(x,y)$. Thus $g\in\Lip(S)$ and $|g|_L\leq L$. Clearly, $\|g\|_\infty\leq \sup_{f\in A}\|f\|_\infty< \infty$, so $g\in\BL(S)$ and $\|g\|_\BL\leq 2\sup_{f\in A}\|f\|_\BL$.
\end{proof}

The {\it support} of $f\in C(S)$, denoted by $\supp f$, is the closure of the set of points where $f$ is nonzero. Lemma \ref{lem:sup op BL functions} implies the following

\begin{lemma}\label{L1_28.04.16b}
Let $(f_k)\subset \BL(S)$ be such that $\sup_{k\ge 1}\|f_k\|_{\BL}<\infty$. Assume that their supports are pairwise disjoint. Then the series $f(x):=\sum_{k=1}^{\infty} f_k(x)$ converges pointwise and $f\in\BL(S)$. In particular,
\begin{equation}\label{eq:Lipschitz constant estimates}
\|f\|_\infty\leq\sup_{k\geq 1}\|f_k\|_\infty, \qquad |f|_L \leq 2\sup_{k\geq 1}|f_k|_L.
\end{equation}
\end{lemma}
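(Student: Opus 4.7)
The plan is to use Lemma \ref{lem:sup op BL functions} after decomposing each $f_k$ into its positive and negative parts. First I would observe that pointwise convergence of $\sum_k f_k(x)$ is essentially trivial: the supports of the $f_k$ are pairwise disjoint, so each $x\in S$ lies in at most one $\supp f_k$, and therefore at most one summand $f_k(x)$ is nonzero. Hence the series is well-defined and $\|f\|_\infty\le\sup_k\|f_k\|_\infty$ is immediate, since $|f(x)|$ equals either $0$ or $|f_{k_0}(x)|\le \|f_{k_0}\|_\infty$ for the unique $k_0$ (if any) with $x\in\supp f_{k_0}$.

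For the Lipschitz bound I would write $f_k=f_k^+-f_k^-$, where $f_k^\pm:=\max(\pm f_k,0)$. Since $\supp f_k^\pm\subset\supp f_k$, both families $\{f_k^+\}_{k\ge 1}$ and $\{f_k^-\}_{k\ge 1}$ have pairwise disjoint supports. Moreover $|f_k^\pm|_L\le |f_k|_L$ and $\|f_k^\pm\|_\infty\le\|f_k\|_\infty$, so both families are uniformly bounded in $\|\cdot\|_\BL$. Define $g^\pm(x):=\sup_{k\ge 1}f_k^\pm(x)$. By the disjointness of supports, for each $x\in S$ there is at most one index $k_0$ for which $f_{k_0}^\pm(x)$ can be strictly positive, and $f_k^\pm\ge 0$ everywhere, so the supremum equals exactly this single (possibly zero) value. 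This identifies $g^\pm$ with the function obtained by summing the $f_k^\pm$ pointwise, and in particular gives the equality $f=g^+-g^-$.

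Lemma \ref{lem:sup op BL functions} then applies to each of $\{f_k^+\}$ and $\{f_k^-\}$ and yields $g^\pm\in\BL(S)$ with $|g^\pm|_L\le\sup_{k}|f_k^\pm|_L\le\sup_{k}|f_k|_L$. Hence $f=g^+-g^-\in\BL(S)$ and the triangle inequality for the Lipschitz seminorm gives
\[
|f|_L\;\le\;|g^+|_L+|g^-|_L\;\le\;2\sup_{k\ge 1}|f_k|_L,
\]
which is the second estimate in \eqref{eq:Lipschitz constant estimates}. The first estimate has already been noted, so the proof is complete. I do not expect any serious obstacle here: the only subtle point is recognising that the factor $2$ is genuinely needed and arises from splitting $f$ as a difference of two one-sided suprema — a direct Lipschitz estimate for $|f(x)-f(y)|$ when $x\in\supp f_{k_0}$ and $y\in\supp f_{k_1}$ with $k_0\ne k_1$ forces the same factor $2$ via $f_{k_0}(y)=f_{k_1}(x)=0$, confirming the bound is sharp in flavour.
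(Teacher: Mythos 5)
Your proof is correct and follows essentially the same route as the paper's: both reduce to nonnegative summands via the decomposition $f^\pm=\sum_k f_k^\pm$ (equivalently $f=g^+-g^-$ with $g^\pm=\sup_k f_k^\pm$, the sup and the sum coinciding because of the disjoint supports), apply Lemma \ref{lem:sup op BL functions} to each one-sided family, and obtain the factor $2$ from the triangle inequality for $|\cdot|_L$. Nothing is missing.
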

\begin{proof}
Because the sets $\supp f_k$ are pairwise disjoint, $f(x)=f_k(x)$ if $x\in\supp f_k$. So the positive part $f^+$ and negative part $f^-$ of $f$ satisfy $f^\pm = \sum_{k=1}^\infty f^\pm_k$ and it suffices to prove the result for $f\geq 0$. In that case, $f=\sup_{k\geq 1} f_k$, and the first estimate in \eqref{eq:Lipschitz constant estimates} follows immediately. The second follows from Lemma \ref{lem:sup op BL functions}.
\end{proof}

\begin{lemma}\label{lem:non-tight sequence prop}
Let $(S,d)$ be a complete separable metric space. Let $\mu_n\in\CM^{+}(S)$, $n\in\N$. Assume that $\{\mu_n: n\ge 1\}$ is not tight. Then there exists $\eps>0$, an increasing sequence $(n_k)$ of positive integers and a sequence of compact sets $(K_{n_k})$ such that
\[
\mu_{n_k}\bigl(K_{n_k}\bigr)\geq \eps\qquad \mbox{for all}\ k\geq 1
\]
and
\[
\dist (K_{n_k}, K_{n_m}):=\min\{ d(x,y)\,|\, x\in K_{n_k},\ y\in K_{n_m}\} > \eps\qquad \mbox{for all}\ k\neq m.
\]
\end{lemma}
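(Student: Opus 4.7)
The plan is to iteratively construct the subsequence $(n_k)$ and the compact sets $K_{n_k}$. Before starting, I would upgrade the non-tightness hypothesis using individual tightness of each $\mu_n$ on the Polish space $S$: if $\{\mu_n\}$ is not tight, then there exists $\eps_0>0$ such that for every compact $K\subset S$ the set $\{n:\mu_n(S\setminus K)\geq \eps_0\}$ is infinite. Otherwise, for some compact $K$ with $\mu_n(S\setminus K)<\eps_0$ eventually, one could enlarge $K$ by the finitely many individually-tight compacts of the exceptional $\mu_n$'s to obtain uniform tightness, contradicting the hypothesis. This upgrade supplies a renewable stock of ``escaping'' indices at every inductive step.

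Fixing $\eps_0$ as above, I would choose $\eps>0$ small in terms of $\eps_0$ and $M:=\sup_n\mu_n(S)$ (finite in the main application; the case $M=\infty$ is handled separately by extracting a subsequence with $\mu_{n_k}(S)\to\infty$ and applying individual tightness directly). Inductively, one builds $n_1<n_2<\cdots$ and compacts $K_{n_k}$ satisfying $\mu_{n_k}(K_{n_k})\geq \eps$ and $\dist(K_{n_k},L_{k-1})>\eps$, where $L_{k-1}:=\bigcup_{i<k}K_{n_i}$. The base step is immediate from inner regularity applied to any $\mu_{n_1}$ with $\mu_{n_1}(S)\geq \eps_0$. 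For the inductive step, the upgraded non-tightness gives infinitely many $n>n_{k-1}$ with $\mu_n(S\setminus L_{k-1})\geq \eps_0$; the hope is that at least one such $n$ also has $\mu_n(S\setminus L_{k-1}^\eps)\geq 2\eps$, whence inner regularity produces $K_{n_k}\subset S\setminus L_{k-1}^\eps$ with mass $\geq\eps$.

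The main obstacle is that this naive hope may fail: the escaping mass $\mu_n(S\setminus L_{k-1})\geq \eps_0$ can be concentrated entirely in the thin shell $L_{k-1}^\eps\setminus L_{k-1}$, leaving no mass at distance $>\eps$ from $L_{k-1}$. To defeat this I would employ a pigeonhole-at-scales argument fixed from the outset. Set $J:=\lceil 4M/\eps_0\rceil$, and at each step consider the $J$ disjoint annuli $L_{k-1}^{j\eps/J}\setminus L_{k-1}^{(j-1)\eps/J}$ for $j=1,\dots,J$. Since their total $\mu_n$-mass is at most $M$, some annulus has mass at most $M/J\leq \eps_0/4$; pigeonholing the index $j^\ast$ over the infinitely many relevant $n$'s yields a common thin annulus. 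Combining this thinness with a careful accounting of the escape function $b_n(t):=\mu_n(S\setminus L_{k-1}^t)$ --- in particular, choosing the first scale at which $b_n$ drops below $\eps_0/2$ and using that the thin annulus forbids a large drop there --- isolates at least $\eps_0/4$ of mass outside $L_{k-1}^{t^\ast}$ for some $t^\ast>0$ independent of $n$, from which inner regularity extracts the desired $K_{n_k}$. Replacing $\eps$ by a sufficiently small fraction of $\eps/J$ at the very start makes the separation uniform across all inductive steps.

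The hard part is precisely the thin-shell obstruction --- ruling out that escaping mass hugs the boundary of previously chosen compacts at arbitrarily small scales. The scale-pigeonhole is the ``set-geometric'' technique foreshadowed in the introduction (cf. \cite{KPS, Lasota-Szarek:2006}), and it crucially relies on the uniform total-variation bound $M$; this bound is supplied by the hypothesis $\sup_n\|\mu_n\|_\TV<\infty$ of Theorem~\ref{thrm:weak-implies-strong convergence} in the intended application.
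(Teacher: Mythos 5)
There is a genuine gap, and it sits exactly where you located the difficulty. Your strategy fixes $\eps$ at the outset ``small in terms of $\eps_0$ and $M$'' and then tries to show that at every inductive step some measure has mass $\geq\eps$ at distance $\geq\eps$ (or $\geq\eps/J$) from the previously chosen compacts. No such choice of $\eps=F(\eps_0,M)$ can work, because the $\eps$ in the lemma's conclusion must depend on finer geometric data than $\eps_0$ and $M$. Concretely, take $S=\ell^2$ with orthonormal basis $(e_n)$ and $\mu_n=\eps_0\delta_{\delta e_n}$ for a small $\delta>0$: the family is not tight, the upgraded non-tightness constant and the mass bound are both $\eps_0$, yet any admissible compacts $K_{n_k}$ must contain the atoms $\delta e_{n_k}$, which are pairwise at distance $\delta\sqrt{2}$. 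Hence the conclusion of the lemma holds only for $\eps<\delta\sqrt{2}$, which can be made smaller than any function of $(\eps_0,M)$. Your annulus pigeonhole does not repair this: the thin annulus $A_{j^\ast}$ (small $\mu_n$-mass) and the annulus where the escape function $b_n$ first drops below $\eps_0/2$ are indexed by different $j$'s in general, so nothing forbids all of the escaping mass from sitting in the very first annulus $A_1$, at distance less than $\eps/J$ from $L_{k-1}$ (exactly the $\ell^2$ configuration above, with $L_{k-1}=\{\delta e_{n_1}\}$). Also, a smaller point: the inductive step needs ``infinitely many $n$ with mass $\geq\eps$ outside $L_{k-1}^{t^\ast}$,'' but $L_{k-1}^{t^\ast}$ is not compact in a general Polish space, so the upgraded non-tightness statement cannot be applied to it directly; some use of inner regularity or total boundedness of $L_{k-1}^{t^\ast}$ is unavoidable here.

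The paper itself does not prove this lemma; it cites \cite{KPS} (Lemma 1) and the proof of Theorem 3.1 in \cite{Lasota-Szarek:2006}. That proof avoids fixing $\eps$ in advance by arguing contrapositively: assume that for \emph{every} $\eps>0$ no infinite configuration as in the conclusion exists, take a \emph{maximal finite} configuration $K_{n_1},\dots,K_{n_p}$ for the given $\eps$, and observe that maximality together with inner regularity (Ulam's theorem for the individual $\mu_n$) forces $\mu_n\bigl(S\setminus(K_{n_1}\cup\dots\cup K_{n_p})^\eps\bigr)<\eps$ for all but finitely many $n$. Since the $\eps$-neighbourhood of a compact set is covered by finitely many balls of radius $2\eps$, this shows that for every $\eps>0$ all the measures put all but $\eps$ of their mass on a finite union of $2\eps$-balls; summing over a sequence $\eps_j\downarrow 0$ and using completeness of $S$ (closed totally bounded sets are compact) yields uniform tightness, contradicting the hypothesis. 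Your ``upgrade'' of non-tightness is correct and is indeed part of the standard argument, but the heart of the proof is this maximality-plus-total-boundedness step, which your proposal replaces by a quantitative claim that fails. Finally, note that your treatment of the unbounded-mass case (extracting $\mu_{n_k}(S)\to\infty$) does not by itself give the required separation of the supports, though this is moot in the paper's applications where $\sup_n\|\mu_n\|_\TV<\infty$.
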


This result was originally stated in \cite{KPS}, Lemma 1, p. 1410, for a sequence $(\mu_n)$ of probability Borel measures with a proof in \cite{Lasota-Szarek:2006} (proof of Theorem 3.1, p. 517-518), but it is also valid for (positive) measures.

In addition to Lemma \ref{lem:non-tight sequence prop} the following observation is made:

\begin{lemma}\label{L1_28.04.16} Let $(\mu_n)\subset \CM^{+}(S)$ be such that $\sup_n \mu_n (S)<\infty$ and let $(E_{n})$ be a sequence of pairwise disjoint Borel measurable subsets of $S$. Then for every $\eps>0$ there exists a strictly increasing subsequence $(n_{i})$ of $\NN$ such that for every $i\ge 1$, 
\begin{equation}
 {\mu}_{n_{i}}\left(\bigcup_{j\neq i} E_{n_{j}}\right)<\eps.
\end{equation}  
\end{lemma}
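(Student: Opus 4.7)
The plan is to combine diagonal extraction with an inductive tail-control argument. Let $M:=\sup_n\mu_n(S)<\infty$; by disjointness of the $E_k$ one has $\sum_k\mu_n(E_k)\le M$ for every $n$. First, since $(\mu_n(E_k))_n$ is bounded by $M$ for each $k$, a standard diagonal extraction produces a subsequence, still denoted $(\mu_n)$, such that $\mu_n(E_k)\to\alpha_k$ as $n\to\infty$ for every $k\in\NN$. Passing to the limit in $\sum_{k=1}^K\mu_n(E_k)\le M$ and then letting $K\to\infty$ gives $\sum_k\alpha_k\le M$, so I can fix $K^*\in\NN$ with $\sum_{k>K^*}\alpha_k<\eps/4$.

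The heart of the argument is to construct inductively strictly increasing sequences $n_1<n_2<\cdots$ and $L_1<L_2<\cdots$, all exceeding $K^*$ and satisfying $n_i<L_i<n_{i+1}$, with the properties
\begin{equation*}
\text{(a)}\quad\mu_{n_i}\Bigl(\bigcup_{j<i}E_{n_j}\Bigr)<\tfrac{\eps}{2},\qquad
\text{(b)}\quad\sum_{k>L_i}\mu_{n_i}(E_k)<\tfrac{\eps}{2}.
\end{equation*}
Condition (b) is immediate from the vanishing tail of the convergent series $\sum_k\mu_{n_i}(E_k)$, once $n_i$ is fixed. The real content is (a): with $n_1,\dots,n_{i-1}$ already chosen and $n_j>K^*$ for every $j<i$ (by the inductive constraint), the identity $\mu_n(\bigcup_{j<i}E_{n_j})=\sum_{j<i}\mu_n(E_{n_j})$ together with the pointwise convergence $\mu_n(E_{n_j})\to\alpha_{n_j}$ (a finite sum in $j$) gives
\begin{equation*}
\lim_{n\to\infty}\mu_n\Bigl(\bigcup_{j<i}E_{n_j}\Bigr) \;=\; \sum_{j<i}\alpha_{n_j} \;\le\; \sum_{k>K^*}\alpha_k \;<\; \tfrac{\eps}{4},
\end{equation*}
so any sufficiently large $n>\max(n_{i-1},L_{i-1},K^*)$ in the extracted subsequence already satisfies (a); I take such an $n$ as $n_i$, and then pick $L_i>n_i$ large enough to ensure (b).

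To conclude, since $n_j>L_{j-1}\ge L_i$ whenever $j>i$, one has $\bigcup_{j>i}E_{n_j}\subset\bigcup_{k>L_i}E_k$; combining this inclusion with (a) and (b) gives $\mu_{n_i}\bigl(\bigcup_{j\ne i}E_{n_j}\bigr)<\eps$ for every $i$, as required. The main obstacle is coordinating past and future contributions simultaneously at every inductive step; this is resolved by the global tail bound $\sum_{k>K^*}\alpha_k<\eps/4$, which, thanks to the induction invariant $n_i>K^*$, automatically controls the past sum $\sum_{j<i}\alpha_{n_j}$ no matter which indices were selected earlier.
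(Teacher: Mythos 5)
Your proof is correct, but it follows a genuinely different route from the paper's. The paper proceeds combinatorially: for a given $\eta>0$ it uses a double pigeonhole argument (choosing $N$ with $N\eta>C$, finding one index $m_1$ and an infinite set $\mathcal S$ with $\mu_n(E_{m_1})<\eta$ for all $n\in\mathcal S$, then splitting $\mathcal S$ into $N$ infinite pieces to find one whose union has small $\mu_{m_1}$-measure), and then nests these constructions with $\eta=\eps/2^k$ and takes a diagonal sequence $n_i=m_1^i$, so that the past contributions are bounded by $\sum_{j<i}\eps/2^j$ and the future ones by $\eps/2^i$. You instead first apply a Bolzano--Weierstrass diagonal extraction to make every $\mu_n(E_k)$ converge to some $\alpha_k$, observe that $\sum_k\alpha_k\le M$ so a single cutoff $K^*$ with $\sum_{k>K^*}\alpha_k<\eps/4$ uniformly controls all possible past sums $\sum_{j<i}\alpha_{n_j}$, and then run one induction in which the past is handled by this limit bound and the future by the tail of the convergent series $\sum_k\mu_{n_i}(E_k)$ for the fixed measure $\mu_{n_i}$. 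All steps check out: the induction invariant $n_j>K^*$ is what makes the past bound index-independent, and the inclusion $\bigcup_{j>i}E_{n_j}\subset\bigcup_{k>L_i}E_k$ follows from $n_j>L_{j-1}\ge L_i$ for $j>i$. What each approach buys: the paper's argument never needs the values $\mu_n(E_k)$ to converge and is purely a counting argument on a fixed total mass; yours trades that for a compactness step, after which the bookkeeping becomes a clean and symmetric $\eps/2$-splitting with a single global cutoff rather than a geometric series of tolerances.
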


\begin{proof} Let us first prove that for every $\eta>0$ there exists a strictly increasing subsequence $(m_i)$ such that
 \begin{equation}\label{e2_28.04.16}
 {\mu}_{m_{1}}\left(\bigcup_{i>1} E_{m_{i}}\right)<\eta
 \end{equation}
 and
 \begin{equation}\label{e3_28.04.16}
 {\mu}_{m_{i}}(E_{m_{1}})<\eta\qquad\text{for all $i\ge 2$.}
 \end{equation}
 Fix $\eta>0$. Set $C:= \sup_n \mu_n (S)$ and let $N\ge 1$ be such that $N\eta>C$. 
 Since for every $n\ge 1$  we have $\sum_{m=1}^N{\mu}_n(E_{m})={\mu}_n\left(\bigcup_{m=1}^N E_{m}\right)
 \leq{\mu}_n(S)\le C<N\eta$, there exists $m\in\{1,\ldots, N\}$ such that
 \begin{equation}\label{e1_28.04.16}
 {\mu}_{n} (E_{m})<\eta.
 \end{equation}
 Thus there exists $m_1\in\{1,\ldots, N\}$ and an infinite set $\mathcal S$ such that condition (\ref{e1_28.04.16}) holds for all $n\in \mathcal S$. 
 Let us split $\mathcal S$ into $N$ disjoint infinite subsets $\mathcal S_1,\ldots, \mathcal S_N$.
 
 Since 
 $$
 \bigcup_{n\in \mathcal S_i} E_{n}\cap  \bigcup_{n\in \mathcal S_j} E_{n}=\emptyset\qquad
 \text{for $i, j\in\{1,\ldots, N\}$, $i\neq j$},
 $$
 we have 
 $$
 \sum_{i=1}^N  {\mu}_{m_1}\left(\bigcup_{n\in \mathcal S_i} E_{n}\right)=
 {\mu}_{m_1}\left(\bigcup_{i=1}^N\bigcup_{n\in \mathcal S_i} E_{n}\right)=
 {\mu}_{m_1}\left(\bigcup_{n\in \mathcal S} E_{n}\right)\le {\mu}_{m_1}(S)\le C<N\eta,
 $$
 which, in turn, yields
 $$
 {\mu}_{m_1}\left(\bigcup_{n\in \mathcal S_{p}} E_{n}\right)<\eta
 $$
 for some $p\in\{1,\ldots, N\}$. Now let $m_2, m_3,\ldots$ be an increasing sequence of elements from the set $\mathcal S_p$. 

By induction we shall define the sequences $(m^{k}_i)$ for $k\ge 1$ in the following way. First set $m^1_i=m_i$ for $i=1, 2,\ldots$, where $(m_i)$ is an increasing sequence satisfying conditions (\ref{e2_28.04.16}) and (\ref{e3_28.04.16}) with $\eta=\eps/2$. Now if $(m^{k-1}_i)$ is given, by what we have already proven, we may find its subsequence 
$(m^{k}_i)$, $m^{k}_1>m^{k-1}_1$, satisfying conditions (\ref{e2_28.04.16}) and (\ref{e3_28.04.16}) with $\eta=\eps/2^{k}$. 

Now set $n_i:=m^i_1$ for $i=1, 2,\ldots$ and observe that
  $$
  {\mu}_{n_{i}}\left(\bigcup_{j\neq i} E_{n_{j}}\right)=\sum_{j<i}{\mu}_{n_{i}}
  (E_{n_{j}}) + {\mu}_{n_{i}}\left(\bigcup_{j> i} E_{n_{j}}\right)\le 
  \sum_{j<i}\eps/2^{j}+\eps/2^{i}<\eps.
  $$
The first term evaluation follows from (\ref{e3_28.04.16}), by the fact that $n_i$ is an element of the sequences $(m^j_n)$ for $j<i$. Similarly, the second term is evaluated by inequality (\ref{e2_28.04.16}).
\end{proof}

\subsection{Proof of Theorem \ref{thrm:weak-implies-strong positive}}

\begin{proof}{\it(Theorem \ref{thrm:weak-implies-strong positive}).}\ 
Let $(\mu_n)\subset \CM^+(S)$.  At the beginning we show that it is enough to prove the claim for $(\mu_n)\subset\CP(S)$.  In fact, from the assumption that $\lim_{n\to\infty}\pair{\mu_n}{f}$ exists for every $f\in\BL(S)$, in particular for $f\equiv 1$, we obtain that 
$\lim_{n\to\infty}\mu_n(S)$ also exists. Set $c=\lim_{n\to\infty}\mu_n(S)$ and observe that $c<\infty$, by the fact that 
$\sup_{n\ge 1}\|\mu_n\|_{TV}<\infty$.
If $c=0$, then we immediately see that $\mu\equiv 0$ fulfills the requirements of our theorem. On the other hand, if $c>0$, then, we can replace $\mu_n$ with $\tilde{\mu}_n:=\mu_n/\mu_n(S)$, which is a probability measure. If the theorem is proven to hold for $(\tilde{\mu}_n)$, then it holds for the $(\mu_n)$ as well. 

To prove the theorem it suffices to show that the family 
$\{\tilde{\mu}_n: n\ge 1\}$ is tight, by the following argument. By Prokhorov's Theorem there exists some measure $\mu_*\in\CP(S)$ and a subsequence $(n_m)$ such that $\tilde\mu_ {n_m}\to\mu_*$ weakly.  
Further, due to the fact that $\lim_{n\to\infty}\pair{\tilde{\mu}_n}{f}$ exists for any $f\in\BL(S)$, we obtain that 
$\lim_{n\to\infty}\pair{\tilde{\mu}_n}{f}=\pair{\mu_*}{f}$ for $f\in\BL(S)$. This in turn, together with the tightness of $\{\tilde{\mu}_n:n\geq 1\}$, implies that $\tilde{\mu}_n\to\mu_*$ $C_b(S)$-weakly, as $n\to\infty$. Indeed, the tightness allows restricting (approximately) to a compact subset $K$. The continuous bounded function on $S$, when restricted to $K$ can be approximated uniformly by a function in $\BL(K)$, since $\BL(K)\subset C(K)$ is $\|\cdot\|_\infty$-dense. The Metric Tietze Extension Theorem (cf. \cite{McS}) allows to extend the function in $\BL(K)$ to one in $\BL(S)$ without changing uniform norm and Lipschitz constant. The claim then follows.
The $C_b$-weak convergence of $\tilde{\mu}_n$ to $\mu_*$ is equivalent to $\|\tilde\mu_n-\mu_*\|_\BL^*\to 0$, as $n\to\infty$, because the latter norm metrises $C_b$-weak convergence on $\CM^+(S)$ (cf. \cite{Dudley:1966}, Theorem 6 and Theorem 8). For $\mu=c\mu_*$ we obtain that $\|\mu_n-\mu\|_\BL^*\to 0$, as $n\to\infty$.

To complete the proof, we have to prove the claim that the family $\{{\mu}_n: n\ge 1\}\subset \CP(S)$ is uniformly tight.
Assume, contrary to our claim, that it is not tight.
By Lemma \ref{lem:non-tight sequence prop}, passing to a subsequence if necessary, we may assume that there exists $\varepsilon>0$ 
and a sequence of compact sets $(K_{n})$  satisfying
 \begin{equation}\label{con1}
 {\mu}_{n}(K_{n})\ge\varepsilon\quad\text{ for every $n\ge 1$}
 \end{equation}
 and
  \begin{equation}\label{con2}
 \dist (K_n, K_m):=\min\{\rho(x, y): x\in K_{n}\,\,\text{and}\,\, y\in K_{m}\}>\varepsilon\quad\text{ for $m\neq n$}.
 \end{equation}
 
 From Lemma \ref{L1_28.04.16}, with $E_n:=K^{\eps/3}_n$, it follows that there exists a subsequence $(n_{i})$ such that for every $i\ge 1$ we have
 \begin{equation}\label{e00}
 {\mu}_{n_{i}}\left(\bigcup_{j\neq i} K_{n_{j}}^{\varepsilon/3}\right)<\varepsilon/2.
\end{equation}
 Note that $\dist (K_{n_{i}}^{\varepsilon/3}, K_{n_{j}}^{\varepsilon/3})>\varepsilon/3$ for $i\neq j$.

We define the function $f: X\to [0, 1]$ by the formula
 $$
 f(x)=\sum_{i=1}^{\infty} f_i(x),
 $$
 where $f_i$ are arbitrary Lipschitz functions with Lipschitz constant $3/\varepsilon$ satisfying
 $$
 f_{i |K_{n_{{2i}}}}=1\quad\text{and}\quad 0\le f_i\le {\bf 1}_{K_{n_{{2i}}}^{\varepsilon/3}}.
 $$
According to Lemma \ref{L1_28.04.16b}, $f\in\BL(S)$ (with $\|f\|_\infty\leq 1$ and $|f|_L\leq 6/\varepsilon$). 
 
 To finish the proof it is enough to observe that for every $i\ge 1$ we have
$$
 \langle {\mu}_{n_{2i}}, f\rangle
 =\sum_{j=1}^{\infty}\langle {\mu}_{n_{2i}}, f_j\rangle\geq {\mu}_{n_{2i}}\left(K_{n_{2i}}\right)
 \overset{(\ref{con1})}{\ge}\varepsilon
 $$
 and
 $$
 \langle {\mu}_{n_{2i+1}}, f\rangle 
 =\sum_{j=1}^{\infty}\langle {\mu}_{n_{2i+1}}, f_j\rangle\leq\sum_{j=1}^{\infty} {\mu}_{n_{2i+1}}\left(K_{n_{2j}}^{\varepsilon/3}\right){\leq}
{\mu}_{n_{2i+1}}\left(\bigcup_{j\neq 2i+1} K_{n_{j}}^{\varepsilon/3}\right)\overset{(\ref{e00})}<\varepsilon/2,
 $$
which contradicts the assumption that $\lim_{n\to\infty}\pair{{\mu}_n}{f}$ exists for every $f\in\BL(S)$. Thus the family $\{{\mu}_n: n\ge 1\}$ is tight and we are done.
\end{proof}

\begin{remark} 1.) An alternative proof is feasible, based upon the elaborate theory presented in \cite{Pachl:2013}. By taking $f=\ind$, one finds that $\sup_n\|\mu_n\|_\TV<\infty$. Since $\BL(S)$ is dense in the space $\mathrm{U}_b(S)$ of uniformly continuous bounded functions on $S$ for the supremum norm (cf. \cite{Dudley:1966}, Lemma 8), one finds that $\pair{\mu_n}{f}$ is Cauchy for every $f\in \mathrm{U}_b(S)$. According to \cite{Pachl:2013}, Theorem 5.45, there exists $\mu\in\CM(S)^+$ such that $\mu_n\to\mu$, $\mathrm{U}_b(S)$-weakly. Then \cite{Pachl:2013} Theorem 5.36 yields that $\|\mu_n-\mu\|_\BL^*\to 0$. \\
2.) In the proof we show that if $(\mu_n)$ is a sequence of positive Borel measures such that $\pair{\mu_n}{f}$ converges for every $f\in\BL(S)$, then $(\mu_n)$ is uniformly tight in $\CM^+(S)$. See \cite{Bogachev-II:2007}, Corollary 8.6.3, p. 204, for results in this direction when $\pair{\mu_n}{f}$ converges for every $f\in C_b(S)$. Under the additional condition that there exists $\mu_*\in\CM^+(S)$ such that $\pair{\mu_n}{f}\to\pair{\mu_*}{f}$ for every $f\in C_b(S)$, tightness results appeared already in e.g. \cite{LeCam:1957}, Theorem 4 for positive measures or \cite{Billingsley:1968}, Appendix III, Theorem 8 for probability measures. 
\end{remark}

\subsection{Proof of Theorem \ref{thrm:weak-implies-strong convergence}}


\begin{proof} {\it (Theorem \ref{thrm:weak-implies-strong convergence})}.\ 
Let $(\mu_n)\subset \CM(S)$ be signed measures such that $\sup_n\|\mu_n\|_\TV<\infty$. Denote by $\mu_n^{+}$ and $\mu_n^{-}$ the positive and negative part of $\mu_n$, $n\ge 1$, respectively.
We consider the following set
\begin{align*}
\begin{aligned}
\mathcal C:=\Big\{\bigl(\beta,(m_n),(\nu_{m_n}),(\vartheta_{m_n})\bigr):
\;&\beta\geq 0,\;
(m_n)\subset\NN \text{ -- an increasing sequence},\\
&\nu_{m_n},\vartheta_{m_n}\in \CP(S),
\;\lim_{n\to\infty}\|\nu_{m_n}-\vartheta_{m_n}\|_\BL^*=0 \\
&\text{and }\;\mu_{m_n}^+\geq \beta \nu_{m_n},\;\mu_{m_n}^-\geq \beta \vartheta_{m_n}\Big\}.
\end{aligned}
\end{align*}
We first observe that $\mathcal C\neq \emptyset$, which follows from the fact that $\bigl(0,(m_n),(\nu_{m_n}),(\vartheta_{m_n})\bigr)\in \mathcal C$ for arbitrary $(m_n)$ and $\nu_{m_n},\vartheta_{m_n}\in \CP(S)$ such that $\lim_{n\to\infty}\|\nu_{m_n}-\vartheta_{m_n}\|_\BL^*=0$. Moreover, since $\bar{c}:=\sup_{n\geq 1}\|\mu_n\|_{TV}<\infty$, we obtain that $0\leq\beta\leq \bar c$ for every $\beta$ for which there are some $(m_n)$ and $\nu_{m_n},\vartheta_{m_n}$ such that $\bigl(\beta,(m_n),(\nu_{m_n}),(\vartheta_{m_n})\bigr)\in \mathcal C$. We can therefore introduce
$$\alpha=\sup\bigl\{\beta: \bigl(\beta,(m_n),(\nu_{m_n}),(\vartheta_{m_n})\bigr)\in \mathcal C\bigr\}.$$

From the definition of $\alpha$ it follows that there exists a subsequence $(m_n)$ of positive integers and an increasing sequence $(\alpha_n)$ of nonnegative constants satisfying
$\lim_{n\to\infty}\alpha_n=\alpha$ and
$$
\mu_{m_n}^+\ge\alpha_n\nu_{m_n}\quad\text{and}\quad\mu_{m_n}^-\ge\alpha_n\vartheta_{m_n},
$$
where $\nu_{m_n},\vartheta_{m_n} \in\CP(S)$ are such that
 $\|\nu_{m_n} -\vartheta_{m_n}\|_\BL^*\to 0$ as $n\to\infty$.

To finish the proof it is enough to show that both the sequences $(\mu_{m_n}^+-\alpha_n\nu_{m_n})$ and $(\mu_{m_n}^- -\alpha_n\vartheta_{m_n})$
are tight. Indeed, then, by the Prokhorov Theorem (\cite{Bogachev-II:2007}, Theorem 8.6.2) there exists a subsequence $(m_{n_k})$ of $(m_n)$ and two measures $\mu^1$ and $\mu^2$ such that the sequences $(\mu_{m_{n_k}}^+-\alpha_{n_k}\nu_{m_{n_k}})$ and $(\mu_{m_{n_k}}^--\alpha_{n_k}\vartheta_{m_{n_k}})$ converge $C_b(S)$-weakly to a positive measure $\mu^1$ and $\mu^2$, respectively. Hence also in $\|\cdot\|_\BL^*$-norm, according to Theorem \ref{thrm:weak-implies-strong positive}. Consequently, $\|\mu_{m_{n_k}}-(\mu^1-\mu^2)\|_\BL^*\to 0$ as $k\to\infty$, by the fact that $\|\nu_{m_{n_k}} -\vartheta_{{m_{n_k}}}\|_\BL^*\to 0$ as $k\to\infty$. This will complete the proof of the theorem. Indeed, if we know that the sequence (and also any subsequence) has a convergent subsequence (in the dual bounded Lipschitz norm), then the sequence is also convergent due to the fact that the limit of all convergent subsequences is the same, by the assumption that $\lim_{n\to\infty}\pair{\mu_n}{f}$ exists for any $f\in \BL(S)$.

Assume now, contrary to our claim, that at least one of the families $(\mu_{m_n}^+-\alpha_n\nu_{m_n})$ or $(\mu_{m_n}^--\alpha_n\vartheta_{m_n})$, say 
the first one, is not tight.
By Lemma \ref{lem:non-tight sequence prop}, passing to a subsequence if necessary, we may assume that there exists $\varepsilon>0$ and a sequence of compact sets $(K_{n})$  satisfying
\begin{align}\label{eq:tilde-mu(K)>epsilon}
(\mu_{m_{n}}^+-\alpha_{n}\nu_{m_{n}})(K_{n})\ge \varepsilon
\end{align}
and
$$
\dist (K_{i},  K_{j})\ge \varepsilon\quad\text{for $i, j\in\mathbb N,\,\, i\neq j.$}
$$
Set
$$
\tilde\mu_n:=\mu_{m_{n}}^+-\alpha_{n}\nu_{m_n}\quad\text{and}\quad
\hat\mu_n:=\mu_{m_n}^- -\alpha_{n}\vartheta_{m_n}.
$$
\vskip 0.2cm

{\bf Claim:}\ {\it For any $0<\eta\leq 1$ there exist $j$, as large as we wish, and $\tau_j, \chi_j\in\CP(S)$ satisfying}
$$
\tilde\mu_j\ge(\varepsilon/2)\tau_j,\qquad \hat\mu_j\ge(\varepsilon/2)\chi_j\quad\mbox{{\it and}}\quad \|\tau_j-\chi_j\|_\BL^*\le \eta.
$$
\vskip 0.2cm

Consequently, there will exist a subsequence $(m_{j_n})$ such that
$$\mu_{m_{j_n}}^+=\alpha_{j_n}\nu_{m_{j_n}}+\tilde{\mu}_{j_n}\geq \alpha_{j_n}\nu_{m_{j_n}}+(\varepsilon/2)\tau_{j_n},$$
$$\mu_{m_{j_n}}^-\geq \alpha_{j_n}\vartheta_{m_{j_n}}+(\varepsilon/2)\chi_{j_n}\qquad \mbox{and}\qquad \|\tau_{j_n}-\chi_{j_n}\|_\BL^*\to 0\ \mbox{as}\ n\to\infty.$$
Now, if we define probability measures $\varrho_{m_{j_n}},\varsigma_{m_{j_n}}$ as follows
$$\varrho_{m_{j_n}}:= (\alpha_{j_n}\nu_{m_{j_n}}+(\varepsilon/2)\tau_{j_n})(\alpha_{j_n}+\varepsilon/2)^{-1},\qquad
\varsigma_{m_{j_n}}:= (\alpha_{j_n}\vartheta_{m_{j_n}}+(\varepsilon/2)\chi_{j_n})(\alpha_{j_n}+\varepsilon/2)^{-1},
$$
we will obtain 
$$
\mu_{m_{j_n}}^+\ge(\alpha_{j_n}+\varepsilon/2)\varrho_{m_{j_n}},\qquad\mu_{m_{j_n}}^-\ge(\alpha_{j_n}+\varepsilon/2)\varsigma_{m_{j_n}}
$$  
and
$\lim_{n\to\infty}\|\varrho_{m_{j_n}} -\varsigma_{m_{j_n}}\|_\BL^*=0$, 
which is impossible, because it contradicts the definition of $\alpha$, since $\lim_{n\to\infty} (\alpha_{j_n}+\varepsilon/2)>\alpha$.

Let us prove the claim. 
Set $\xi_n:=\tilde\mu_n+\hat\mu_n$ for $n\ge 1$ and let $C:=\sup_{n\ge 1}{\xi}_n(S)$. Observe that $C\le \sup_{n\ge 1}\|\mu_n\|_{\TV}<\infty$. 
Fix $0<\eta\leq 1$ and let $\kappa\in (0, \varepsilon/6)$ be such that $6\kappa(1/\varepsilon+2/\varepsilon^2)<\eta$. 
Lemma \ref{L1_28.04.16} yields an increasing sequence $(j_n)\subset\NN$ such that
\begin{equation}\label{e1_29.04.16}
\begin{aligned}
{\xi}_{j_n}\left(\bigcup_{l\neq n} K_{j_l}^{\varepsilon/3}\right)<\kappa/4
\end{aligned}
\end{equation}
and hence
$$
\tilde\mu_{j_n}\left(\bigcup_{l\neq n} K_{j_l}^{\varepsilon/3}\right)<\kappa/4\quad\text{and}\quad\hat\mu_{j_n}\left(\bigcup_{l\neq n} K_{j_l}^{\varepsilon/3}\right)<\kappa/4
$$
for all $n=1, 2,\ldots$. 

Choose  $N\ge 1$ such that $N\kappa/4 > C$ and set $W_{j_n}^p:=K_{j_n}^{p\varepsilon/(3N)}\setminus K_{j_n}^{(p-1)\varepsilon/(3N)}$ for $p=1,\ldots, N$. Observe that
$W_{j_n}^p\cap W_{j_n}^q=\emptyset$ for $p\neq q$. Since $\sum_{p=1}^N \xi_{j_n}(W_{j_n}^p)=\xi_{j_n}(\bigcup_{p=1}^N W_{j_n}^p)\le C$, $n\ge 1$, 
for every $n$ there exists $p_n\in\{1,\ldots, N\}$ such that
\begin{align}\label{eq:xi}
{\xi}_{j_n}\left(W_{j_n}^{p_n}\right)< \kappa/4.
\end{align}
Now we are in a position to define a sequence $(f_n)$ of functions from $S$ to $[-1, 1]$. The construction is as follows. For $n=2k+1$ for $k\ge 1$, we set $f_n\equiv 0$. On the other hand, to define functions $f_n$ for $n=2k$ we introduce the measures
$$
\tilde\mu_{j_n}'(\cdot)=\tilde\mu_{j_n}\left(\cdot\cap K_{j_n}^{(p_n-1)\varepsilon/(3N)}\right)
$$ 
and
$$
\hat\mu_{j_n}'(\cdot)=\hat\mu_{j_n}\left(\cdot\cap K_{j_n}^{(p_n-1)\varepsilon/(3N)}\right).
$$ 
Further, there exists a Lipschitz function $\tilde f_n: K_{j_n}^{(p_n-1)\varepsilon/(3N)}\to [{-1}, 1]$ with $|\tilde f_n|_L\le 1$ such that
$\pair{\tilde\mu_{j_n}^{\prime}-\hat\mu_{j_n}^{\prime}}{\tilde f_n}\ge \frac{1}{2} \|\tilde\mu^{\prime}_{j_n}-\hat\mu^{\prime}_{j_n}\|_\BL^*$. Let  
$f_n$ be a Lipschitz extension of the function $\tilde f_n$ to $S$ such that $f_n(x)=\tilde f_n(x)$ for $x\in K_{j_n}^{(p_n-1)\varepsilon/(3N)}$ and $f_n(x)=0$ for $x\notin K_{j_n}^{p_n\varepsilon/(3N)}$. We may assume that  $|f_n|_L\le 3N/\varepsilon$. The existence of the extension function follows from McShane's formula (see \cite{McS}).
Let $f=\sum_{k=1}^{\infty}f_{2n}$. 
Since $\dist (\supp f_i, \supp f_j)>\varepsilon/3$ for $i, j\ge 1$, $i\neq j$, 
$f$ is a bounded Lipschitz function, by Lemma \ref{L1_28.04.16b}. 

We show that $\langle \mu_{m_{j_i}}, f\rangle\le \kappa/2$  for $i=2k+1$. 
Indeed,  for $k$ sufficiently large we have
\begin{align*}
\left\langle \mu_{m_{j_{2k+1}}}, f\right\rangle&=\sum_{n=1}^{\infty}\left\langle \mu_{m_{j_{2k+1}}}, f_{2n}\right\rangle
\leq \sum_{n=1}^{\infty}\xi_{j_{2k+1}}\left(K_{j_{2n}}^{\varepsilon/3}\right)
+\alpha_{j_{2k+1}}\|\nu_{m_{j_{2k+1}}}-\vartheta_{m_{j_{2k+1}}}\|_\BL^*\\
&\leq \xi_{j_{2k+1}}\left(\bigcup_{l\neq 2k+1}K_{j_{l}}^{\varepsilon/3}\right)+\alpha_{j_{2k+1}}\|\nu_{m_{j_{2k+1}}}-\vartheta_{m_{j_{2k+1}}}\|_\BL^*\\
&\overset{(\ref{e1_29.04.16})}< {\kappa}/{4} + \alpha_{j_{2k+1}}\|\nu_{m_{j_{2k+1}}}-\vartheta_{m_{j_{2k+1}}}\|_\BL^*<{\kappa}/{2},
\end{align*}
by the properties of the measures $\nu_{m_{j_{2k+1}}}, \vartheta_{m_{j_{2k+1}}}$ and the definition of the functions $f_{2n}$. Therefore
\[
\lim_{i\to\infty} \pair{\mu_{m_{j_i}}}{f} = \lim_{k\to\infty}\pair{\mu_{m_{j_{2k+1}}}}{f}\leq \kappa/2,
\]
because we assume that the limit of $\pair{\mu_m}{f}$ exists.

On the other hand, for $i=2k$ we have 
\begin{align*}
\left\langle \mu_{m_{j_{2k}}}, f\right\rangle&=\sum_{n=1}^{\infty}\left\langle \mu_{m_{j_{2k}}}, f_{2n}\right\rangle
\ge -\sum_{n\neq k}^{\infty}\xi_{j_{2k}}\left(K_{j_{2n}}^{\varepsilon/3}\right)+\left\langle \mu_{m_{j_{2k}}}, f_{2n},\right\rangle\\
&\ge -\sum_{n\neq k}^{\infty}\xi_{j_{2k}}\left(K_{j_{2n}}^{\varepsilon/3}\right) - {\xi}_{j_{2k}}\left(W_{j_{2k}^{p_{2k}}}\right) +\left\langle\tilde\mu^{\prime}_{{j_{2k}}}-\hat\mu^{\prime}_{{j_{2k}}}, \tilde{f}_{2k}\right\rangle\\
&\ge -\kappa/{4}-\kappa/4 +\frac{1}{2} \|\tilde\mu^{\prime}_{j_{2k}}-\hat\mu^{\prime}_{j_{2k}}\|_\BL^*,
\end{align*}
by the fact that $\|f_{2n}\|_{\infty}\le 1$.
Since $\lim_{i\to\infty}\pair{\mu_{m_{j_i}}}{f}\le \kappa/2$, by the estimation obtained for $i=2k+1$ and the assumption that the limit exists, we have 
$$
-\kappa/{4}-\kappa/4 +\frac{1}{2} \|\tilde\mu^{\prime}_{j_{2k}}-\hat\mu^{\prime}_{j_{2k}}\|_\BL^*\le 3\kappa/4
$$
for $k$ sufficiently large and consequently
$$
\|\tilde\mu^{\prime}_{j_{2k}}-\hat\mu^{\prime}_{j_{2k}}\|_\BL^*\le 3\kappa
$$
for all $k$ sufficiently large. Thus
$$
\hat\mu^{\prime}_{j_{2k}}(S)\ge \tilde\mu^{\prime}_{j_{2k}}(S)-3\kappa\ge \varepsilon-\varepsilon/2=\varepsilon/2.
$$

Hence, for probability measures
$$
\tilde\nu_{j_{2k}}:=\tilde\mu^{\prime}_{j_{2k}}/\tilde\mu^{\prime}_{j_{2k}}(S)\quad\text{and}\quad
\hat\nu_{j_{2k}}:=\hat\mu^{\prime}_{j_{2k}}/\hat\mu^{\prime}_{j_{2k}}(S)
$$
we have for $k$ sufficiently large
$$
\tilde\mu_{j_{2k}}\ge \tilde\mu_{j_{2k}}^{\prime}\ge(\varepsilon/2)\tilde\nu_{j_{2k}}\quad\text{and}\quad \hat\mu_{j_{2k}}\ge \hat\mu_{j_{2k}}^{\prime}\ge(\varepsilon/2)\hat\nu_{j_{2k}}.
$$
Finally, observe that for $k$ sufficiently large,
\begin{align*}
\|\tilde\nu_{j_{2k}}-\hat\nu_{j_{2k}}\|_\BL^*&\le\|\tilde\mu_{j_{2k}}^{\prime}/\tilde\mu_{j_{2k}}^{\prime}(S)-\hat\mu_{j_{2k}}^{\prime}/\tilde\mu_{j_{2k}}^{\prime}(S)\|_\BL^*+\|\hat\mu_{j_{2k}}^{\prime}\|_\BL^*|1/\tilde\mu_{j_{2k}}^{\prime}(S)-1/\hat\mu_{j_{2k}}^{\prime}(S)|
\\
&\le(1/\tilde\mu_{j_{2k}}^{\prime}(S)) \|\tilde\mu_{j_{2k}}^{\prime}-\hat\mu_{j_{2k}}^{\prime}\|_\BL^*+ 1/(\tilde\mu_{j_{2k}}^{\prime}(S)\hat\mu_{j_{2k}}^{\prime}(S))|\tilde\mu_{j_{2k}}^{\prime}(S)-\hat\mu_{j_{2k}}^{\prime}(S)|
\\
&\le 6\kappa/\varepsilon+12\kappa/\varepsilon^2<\eta,
\end{align*}
by the fact that $\tilde\mu_{j_{2k}}^{\prime}(S), \hat\mu_{j_{2k}}^{\prime}(S)\ge\varepsilon/2$ and $|\tilde\mu_{j_{2k}}^{\prime}(S)-\hat\mu_{j_{2k}}^{\prime}(S)|\le\|\tilde\mu_{j_{2k}}^{\prime}-\hat\mu_{j_{2k}}^{\prime}\|_\BL^*\le 3\kappa$.
This completes the proof of the claim, hence the theorem. 
\end{proof}

\begin{remark}\label{rem:alternative approaches}
It is possible to prove Theorem \ref{thrm:weak-implies-strong convergence} by means of a reduction-to-$\ell^1$-trick, inspired by ideas in \cite{Pachl:1979,Pachl:2013}, cf. \cite{Hille:2016}. Another proof is feasible, starting from \cite{Pachl:1979}, Theorem 3.2, see \cite{Worm-thesis:2010}.
However, here we prefer to present an independent, `set-geometric' proof that is self-contained and founded on the well-established result for the case of positive measures, Theorem \ref{thrm:weak-implies-strong positive}.
\end{remark}

\section{Further consequence: an alternative proof for weak sequential completeness}
\label{sec:weak sequ completeness}

Theorem \ref{thrm:weak-implies-strong convergence} allows -- in the case of a Polish space -- to give an alternative proof of the well-known fact that $\CM(S)$ is $C_b(S)$-weakly sequentially complete, that goes back to Alexandrov \cite{Alexandrov:1943} and Varadarajan \cite{Varadarajan:1961}, see. e.g. \cite{Dudley:1966}, Theorem 1 or \cite{Bogachev-II:2007}, Theorem 8.7.1 for a more general topological setting. We include our proof based on Theorem \ref{thrm:weak-implies-strong convergence} here, because it employs an argument for reduction to functions in $\BL(S)$, which by itself is an interesting result. 

This reduction is based on the following observation. Let $\CDD_S$ be the set of all metrics on $S$ that metrize the topology of $S$ as a complete separable metric space. We need to stress the dependence of the space $\BL(S)$ on the chosen metric on $S$. So for $d\in\CDD_S$ we write $\BL(S,d)$ for the space of bounded Lipschitz functions on $(S,d)$. The key observation is, that
\begin{equation}\label{Cb as union of BLS}
C_b(S) = \bigcup_{d\in\CDD_S} \BL(S,d).
\end{equation}
In fact, fix $d_0\in\CDD_S$. If $f\in C_b(S)$, then 
\[
d_f(x,y) := d_0(x,y)\vee|f(x)-f(y)|
\]
is a metric on $S$ such that $d_f\in\CDD_S$ and $f\in\BL(S,d_f)$. Here $\vee$ denotes the maximum.

The precise statement we consider is the following:
\begin{theorem}[Weak sequential completeness]
Let $S$ be a Polish space. Let $(\mu_n)\subset\CM(S)$ be such that $\pair{\mu_n}{f}$ converges for every $f\in C_b(S)$. Then there exists $\mu_*\in\CM(S)$ such that $\pair{\mu_n}{f}\to\pair{\mu_*}{f}$ for every $f\in C_b(S)$.
\end{theorem}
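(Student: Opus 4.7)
The plan is to invoke Theorem \ref{thrm:weak-implies-strong convergence} twice: once with the given Polish metric $d_0$ on $S$ to produce a candidate limit measure $\mu_*$, and then with an auxiliary metric tailored to an arbitrary test function $f \in C_b(S)$ in order to transfer convergence from $\BL(S,d_0)$ to all of $C_b(S)$. First, the Uniform Boundedness Principle applied to $(\mu_n)$ viewed as functionals on $C_b(S)$ yields $\sup_n \|\mu_n\|_\TV < \infty$ (one uses that the dual norm on $\CM(S)\subset C_b(S)^*$ agrees with total variation). Since $\BL(S,d_0)\subset C_b(S)$, the pairing $\pair{\mu_n}{g}$ converges for every $g\in\BL(S,d_0)$, so Theorem \ref{thrm:weak-implies-strong convergence} furnishes $\mu_*\in\CM(S)$ with $\|\mu_n-\mu_*\|_\BL^*\to 0$, and in particular $\pair{\mu_n}{g}\to\pair{\mu_*}{g}$ for all $g\in\BL(S,d_0)$.

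Next, fix $f\in C_b(S)$ and consider the metric $d_f(x,y) := d_0(x,y)\vee |f(x)-f(y)|$, as suggested in \eqref{Cb as union of BLS}. The decisive observation is that $d_f\in\CDD_S$. Since $d_f\ge d_0$ the $d_f$-topology is a priori finer, but continuity of $f$ with respect to $d_0$ shows that $d_0$-convergent sequences are $d_f$-convergent; hence both metrics induce the same topology on $S$. Completeness of $(S,d_f)$ follows because a $d_f$-Cauchy sequence is $d_0$-Cauchy, converges in $d_0$ to some $x$, and then $f$-values converge by continuity. Separability passes to $d_f$ because the underlying topology is unchanged. In particular the Borel $\sigma$-algebra is preserved, so $\CM(S,d_f)=\CM(S,d_0)$ as sets of measures, and the total variation bound on $(\mu_n)$ persists. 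The hypothesis $\lim_n\pair{\mu_n}{g}$ exists for every $g\in\BL(S,d_f)\subset C_b(S)$, so Theorem \ref{thrm:weak-implies-strong convergence} applied in $(S,d_f)$ yields $\tilde\mu\in\CM(S)$ with $\pair{\mu_n}{g}\to\pair{\tilde\mu}{g}$ for every $g\in\BL(S,d_f)$; in particular for $g=f$.

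It remains to identify $\tilde\mu$ with $\mu_*$. Because $d_f\ge d_0$ one has $\BL(S,d_0)\subseteq\BL(S,d_f)$, so both measures agree on $\BL(S,d_0)$. Now $\BL(S,d_0)$ is a determining class for finite signed Borel measures on $S$: any closed set $C$ has indicator $\ind_C$ as the pointwise monotone bounded limit of $h_{\lambda,C}\in\BL(S,d_0)$ as $\lambda\downarrow 0$, so dominated convergence forces $\mu_*(C)=\tilde\mu(C)$ on all closed sets, hence $\mu_*=\tilde\mu$. This gives $\pair{\mu_n}{f}\to\pair{\mu_*}{f}$, and since $f\in C_b(S)$ was arbitrary, the proof is complete. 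The only delicate point is the verification that $d_f$ genuinely belongs to $\CDD_S$; once this is established, the two applications of the Schur-like property, combined with the routine measure-theoretic uniqueness argument, assemble the proof without difficulty.
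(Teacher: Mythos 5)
Your proof is correct and follows essentially the same route as the paper: Banach--Steinhaus for the total variation bound, the auxiliary metric $d_f(x,y)=d_0(x,y)\vee|f(x)-f(y)|$ to place each $f\in C_b(S)$ inside some $\BL(S,d)$, two applications of Theorem \ref{thrm:weak-implies-strong convergence}, and identification of the limit measures on closed sets via the functions $h_{\lambda,C}$ and a $\pi$-system argument. The only (harmless) difference is that you exploit $\BL(S,d_0)\subseteq\BL(S,d_f)$ directly, whereas the paper compares two arbitrary metrics through their common refinement $\bar d=d\vee d'$.
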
 

\begin{proof}
The norm of $\mu_n$ viewed as a continuous linear functional on $C_b(S)$ is its total variation norm. Hence, according to the Banach-Steinhaus Theorem, $\sup_{n\geq 1} \|\mu_n\|_\TV<\infty$. For any $d\in\CDD_S$, $\pair{\mu_n}{f}$ converges for every $f\in C_b(S)$, so in particular for every $f\in\BL(S,d)$. The sequence $(\mu_n)$ is bounded in total variation norm, so Theorem \ref{thrm:weak-implies-strong convergence} implies there exists $\mu_*^d\in\CM(S)$ such that $\pair{\mu_n}{f}\to\pair{\mu_*^d}{f}$ for every $f\in \BL(S,d)$. We proceed to show that the limit measure $\mu_*^d$ is independent of $d$.

Let $d'\in\CDD_S$. Put 
\[
\bar{d}(x,y):=d(x,y)\vee d'(x,y).
\]
Then $\bar{d}\in\CDD_S$ and $\BL(S,\bar{d})$ contains both $\BL(S,d)$ and $\BL(S,d')$. Let $C\subset S$ be closed. There exist sequences $(h_n)$ and
$(h'_n)$ in $\BL(S,d)$ and $\BL(S,d')$ respectively, such that $h_n\downarrow \ind_C$ and $h'_n\downarrow \ind_C$ pointwise. Both these sequences are in $\BL(S,\bar{d})$, so
\[
\mu_*^d(C) = \lim_{k\to\infty} \pair{\mu_*^d}{h_k} = \lim_{k\to\infty}\ \lim_{n\to\infty} \pair{\mu_n}{h_k}
= \lim_{k\to\infty} \pair{\mu_*^{\bar{d}}}{h_k} = \mu_*^{\bar{d}}(C).
\]
A similar argument applies to $\mu_*^{d'}$, using the sequence $(h'_n)$ in $\BL(S,d')$ instead of $(h_n)$. So $\mu_*^d$ and $\mu_*^{d'}$ (and $\mu_*^{\bar{d}}$) agree on the $\pi$-system consisting of closed sets, which generate the Borel $\sigma$-algebra. Hence these measures are equal on all Borel sets. That is, there exists $\mu_*\in\CM(S)$ such that $\pair{\mu_n}{f}\to\pair{\mu_*}{f}$ for every $f\in\BL(S,d)$ {\it for every} $d\in\CDD_S$. Thus for every $f\in C_b(S)$ in view of \eqref{Cb as union of BLS}.
\end{proof}

\vskip 0.2cm

{\bf Acknowledgement.}\ This paper resulted partially from yet unpublished results from Daniel Worm's completed PhD research project (cf. \cite{Worm-thesis:2010}), from a functional analysis seminar at Leiden University in the spring of 2016 and a PhD course in Milano in January 2014 by Sander Hille \cite{Hille:2016} (partially building on \cite{Pachl:2013}) on the topic of dynamical systems in spaces of measures, and discussions among the authors and participants in the seminar about various aspects of this topic. We thank Hanna Wojew\'odka for carefully reading the proof of the main theorem and Marcel de Jeu for discussions concerning the coincidence of topologies on sets of measures.


\begin{thebibliography}{99}

\bibitem{Ackleh-Ito:2005} Ackleh, A.S. and K. Ito (2005), Measure-valued solutions for a hierarchically size-structured population, \textit{J. Diff. Equ.} {\bf 217}, 431--455.

\bibitem{Albiac-Kalton:2006} Albiac, F. and N.J. Kalton (2006), {\it Topics in Banach Space Theory}, Springer.

\bibitem{Alexandrov:1943} Alexandrov, A.D. (1943), Additive set-functions in abstract spaces, {\it Mat. Sbornik} {\bf 13}:169-243.

\bibitem{Barnsley_ea:1988} Barnsley, M.F., Demko, S.G., Elton, J.H. and J.S. Geronimo (1988), Invariant measures arising from iterated function systems with place depedent probabilities, {\it Ann. Inst. Henri Poincar\'e} {\bf 24}, 367--394.

\bibitem{Billingsley:1968} Billingsley, P. (1968), {\it Convergence of Probability Measures}, John Wiley \& Sons: New York.

\bibitem{Bogachev-II:2007} Bogachev, V.I. (2007), {\it Measure Theory; Volume I and II}, Springer Verlag: Berlin.

\bibitem{Canizo_ea:2013} Canizo, J.A. , Carrillo, J.A. and S. Cuadrado (2013), Measure solutions for some models in population dynamics, \textit{Acta Appl. Math.} {\bf 123}, 141--156.

\bibitem{Carillo_ea:2012} Carrillo, J.A., Colombo, R.M., Gwiazda, P. and A. Ulikowska (2012), Structured populations, cell growth and measure valued balance laws, \textit{J. Diff. Equ.} {\bf 252} 3245--3277.

\bibitem{Conway:1985} Conway, J.B. (1985), {\it A Course in Functional Analysis}, Springer Verlag: New York.

\bibitem{Czapla-Horbacz:2014} Czapla, D. and K. Horbasz (2014), Equicontinuity and stability properties of Markov chains arising from iterated function systems on Polsih spaces, \textit{Stoch. Anal. Apll.} {\bf 32}(1), 1--29.

\bibitem{DaPrato-Zabczyk:1992} Da Prato, G. and J. Zabczyk (1992), \textit{Stochastic Equations in Infinite Dimensions}, Cambridge University Press.

\bibitem{Davydov-Rotar:2009} Davydov, Y. and V. Rotar (2009), On asymptotic proximity of distributions, \textit{J. Theor. Probab.} {\bf 22}(1), 82--99.

\bibitem{Dudley:1966} Dudley, R.M. (1966), Convergence of Baire measures, \textit{Stud. Math.} \textbf{27}, 251--268. Correction to: ``Convergence of Baire measures'', \textit{Stud. Math.} \textbf{51} (1974), 275.

\bibitem{Evers_ea:2015} Evers, J., Hille, S.C. and A. Muntean (2015), Mild solutions to a measure-valued mass evolution problem with flux boundary conditions. \textit{J. Diff. Equ.} {\bf 259}, 1068--1097.

\bibitem{Evers_ea:2016} Evers, J., Hille, S.C. and A. Muntean (2016), Measure-valued mass evolution problems with flux boundary conditions and solution-dependent velocities, \textit{SIAM J. Math. Anal.} {\bf 48}(3), 1929--1953.

\bibitem{Granirer-Leinert:1981} Granirer, E.E. and M. Leinert (1981), On some topologies which coincide on the unit sphere of the Fourier-Stieltjes algebra $B(G)$ and of the measure algebra $M(G)$, {\it Rocky Mountain J. Math.} {\bf 11}(3), 459--472.

\bibitem{Gwiazda_ea:2010} Gwiazda, P., Lorenz, Th. and A. Marciniak-Czochra (2010), A nonlinear structured poopulation model: Lipschitz continuity of measure-valued solutions with respect to model ingredients, {\it J. Diff. Eq.} {\bf 248}, 2703--2735.

\bibitem{Hairer-Mattingly:2006} Hairer, M. and  J.C. Mattingly (2006), Ergodicity of the 2D Navier-Stokes equations with degenerate stochastic forcing, {\it Ann. Math.} {\bf 164}, 993--1032.

\bibitem{Hille:2016} Hille, S.C., \textit{Dynamical systems in spaces of measures}, Lecture Notes of PhD course given at Universit\`a degli Studi di Milano-Bicocca, Milano, 20th--30th January 2014. \textit{In preparation}.

\bibitem{HHS:2016} Hille, S.C., Horbacz, K. and T. Szarek (2016), Existence of a unique invariant measure for a class of equicontinuous Markov operators with application to a stochastic model for an autoregulated gene, to appear in {\it Ann. Math. Blaise Pascal}.

\bibitem{Hille-Worm:2009a} Hille, S.C. and D.T.H. Worm (2009). Continuity properties of Markov semigroups and their restrictions to invariant $L^1$-spaces, {\it Semigroup Forum} {\bf 79}, 575--600.

\bibitem{Hille-Worm:2009} Hille, S.C. and D.T.H. Worm (2009). Embedding of semigroups of Lipschitz maps into positive linear semigroups on ordered Banach spaces generated by measures. \textit{Integr. Equ. Oper. Theory} {\bf 63}: 351--371.



\bibitem{Jamison:1964} Jamison, B. (1964), Asymptotic behaviour of succesive iterates of continuous functions under a Markov operator, \textit{J. Math. Anal. Appl.} {\bf 9}, 203--214.

\bibitem{Kalton:2004} Kalton, N.J. (2004), Spaces of Lipschitz and H\"older functions and their applications, \textit{Collect. Math.} {\bf 55}(2), 171--217.

\bibitem{Kelley:1955} Kelley, J.L. (1955), {\it General Topology}, Van Nostrand.

\bibitem{KPS} Komorowski, T., Peszat, S. and T. Szarek (2010), On ergodicity of some Markov processes, {\it Ann. Probab.} {\bf 38},
1401-1443.

\bibitem{Lasota-Szarek:2006} Lasota, A. and T. Szarek (2006), Lower bound technique in the theory of a stochastic differential equations, {\it J. Diff. Equ.} {\bf 231}, 513--533.

\bibitem{Lasota-Yorke:1994} Lasota, A. and J.A. Yorke (1994), Lower bound technique for Markov operators and iterated function systems, {\it Random \& Comp. Dynamics} {\bf 2}(1), 41--77.

\bibitem{LeCam:1957} LeCam, L. (1957), Convergence in distribution of stochastic processes, {\it Univ. California Publ. Statistics} {\bf 2}, 207--236.

\bibitem{Li_ea:2015} Li, J., Tu, S. and X. Ye (2015), Mean ergodicity and mean sensitivity, \textit{Ergod. Th. \& Dynam. Sys.} {\bf 35}(8), 2598--2612.

\bibitem{McS} McShane, E.J. (1934), Extension of range of functions,  {\it Bull. Amer. Math. Soc.} {\bf 40 (12)}, 837--842.

\bibitem{Meyn-Tweedie:2009} Meyn, S. P. and R. L. Tweedie (2009), \textit{Markov Chains and Stochastic Stability}, Cambridge
University Press, Cambridge.

\bibitem{Pachl:1979} Pachl, J.K. (1979), Measures as functionals on uniformly continuous functions, \textit{Pacific J. Math.} {\bf 82}(2): 515--521.

\bibitem{Pachl:2013} Pachl, J.K. (2013), \textit{Uniform Spaces and Measures}, Fields Institute Monographs, Vol. 30, Springer: New York.

\bibitem{Piccoli-Tosin:2011} Piccoli, B. and A. Tosin (2011), Time-evolving measures and macroscopic modeling of pedestrian flows, \textit{Arch. Rational Mech. Anal.} {\bf 199}, 707--738.

\bibitem{SSU:2010} Szarek, T., \'Sl\c eczka, M. and M. Urba\'nski (2010), On stability of velocity vectors for some passive tracer models, {\it Bull. London Math. Soc.} {\bf 42}(5), 923--936.

\bibitem{Szarek-Worm:2012} Szarek, T. and D.T.H. Worm (2012), Ergodic measures of Markov semigroups with the e-property, {\it Ergod. Th. \& Dynam. Sys.} {\bf 32}(3), 1117--1135.

\bibitem{Varadarajan:1961} Varadarajan, V.S. (1961), Measures on topological spaces (Russ.) {\it Mat. Sbornik} {\bf 55}(97) 35-100; English translation: {\it A.M.S. Transl.} {\bf 2}(48), 161--228. 

\bibitem{Worm-thesis:2010} Worm, D.T.H. (2010), \textit{Semigroups on spaces of measures}, PhD. thesis, Leiden University, The Netherlands.
{\footnotesize Available at: \texttt{www.math.leidenuniv.nl/nl/theses/PhD/}}


\end{thebibliography}
\end{document}